\newtheorem{theorem}{Theorem}[section]
\theoremstyle{definition}
\newtheorem{definition}[theorem]{Definition}
\theoremstyle{remark}
\newtheorem{remark}[theorem]{Remark}
\newtheorem{lemma}[theorem]{Lemma}
\newtheorem{corollary}[theorem]{Corollary}
\newtheorem{proposition}[theorem]{Proposition}
\newcommand{\MdF}{\mathbb{F}}
\newcommand{\MdN}{\mathbb{N}}
\newcommand{\MdZ}{\mathbb{Z}}
\newcommand{\Z}{\mathbb{Z}}
\newcommand{\MdQ}{\mathbb{Q}}
\newcommand{\Q}{\mathbb{Q}}
\newcommand{\MdR}{\mathbb{R}}
\newcommand{\MdP}{\mathbb{P}}
\newcommand{\mass}{\text{mass}}
\newcommand{\Gen}{\text{Genus}}
\newcommand{\Wat}{\text{Wat}}
\newcommand{\sym}{\text{sym}}
\newcommand{\Watp}{\text{Wat}_p}
\newcommand{\Rescale}{\text{rescale}}
\newcommand{\rank}{\text{rank}}
\newcommand{\maxprime}{\text{maxprime}}
\newcommand{\Jordan}{\text{Jordan}}
\newcommand{\iv}{^{-1}}
\newcommand{\rarr}{\rightarrow}
\newcommand{\disc}{\text{disc}}
\newcommand{\erz}[1]{\ensuremath{\langle #1\rangle}}
\newcommand{\Aut}{\text{Aut}}
\newcommand{\MAGMA}{\textsc{Magma} }
\newcommand{\Gram}{\text{Gram}}
\newcommand{\std}{\text{std}}
\newcommand{\Quot}{\text{Quot}}
\begin{document}

\markboth{David Lorch}
{Single-Class Genera of Positive Integral Lattices}

\title{Single-Class Genera of Positive Integral Lattices}

\author{DAVID LORCH}

\address{Lehrstuhl D f\"ur Mathematik, RWTH Aachen University}
\email{david.lorch@math.rwth-aachen.de}

\author{MARKUS KIRSCHMER}

\address{Lehrstuhl D f\"ur Mathematik, RWTH Aachen University}
\email{markus.kirschmer@math.rwth-aachen.de}

\begin{abstract}

We give an enumeration of all positive definite primitive $\MdZ$-lattices in dimension $\geq 3$ whose genus consists of a single isometry class. This is achieved by using bounds obtained from the \textsc{Smith-Minkowski-Siegel} mass formula to computationally construct the square-free determinant lattices with this property, and then repeatedly calculating pre-images under a mapping first introduced by \textsc{G.\,L.\,Watson}.\\ We hereby complete the classification of single-class genera in dimensions $4$ and $5$ and correct some mistakes in Watson's classifications in other dimensions. A list of all single-class primitive $\MdZ$-lattices has been compiled and incorporated into the Catalogue of Lattices.

\end{abstract}

\maketitle


\section{Introduction}

There has been extensive research on single-class lattices. \textsc{G.\,L.\,Watson} proved in \cite{watsonsingleclass} that single-class $\MdZ$-lattices cannot exist in dimension $n>10$ and, in a tremendous effort, tried to compile complete lists of single-class lattices in dimensions $3$~--~$10$ using only elementary number theory (\cite{ternary1, watson5, quaternary, ternary2, watson910, watson8, watson7, watson6}). While he succeeded in classifying most of these lattices, he found the case of dimension $4$ and $5$ to be exceedingly difficult and classified only a subset of the single-class lattices. 
In this work, all of the lattices missing from his classification have been found for what we believe is the first time. It turns out that, aside from some omissions in dimensions $3$ to $6$, \textsc{Watson}'s results are largely correct.


Partial improvements to his results have already been published, notably for the three-dimensional case in an article by \textsc{Jagy} et al., \cite{jagy}, whose results agree with our computation. Another such improvement concerns the subset of single-class $\MdZ$-lattices in dimensions $3$~--~$10$ which correspond to a maximal primitive quadratic form -- this has been enumerated by \textsc{Hanke} in \cite{hanke}. Again, these results agree with ours.

In dimension~$2$, single-class $\MdZ$-lattices have been classified by capitalizing on a connection to class groups of imaginary quadratic number fields, cf.~Voight in~\cite{voight}. This classification is proven complete if the Generalized Riemann Hypothesis holds.

\subsection{Statement of results}
We give an algorithm for finding, up to equivalence, all primitive positive definite $\MdZ$-lattices with class-number $1$ in dimension $\geq 3$. Computation on genera of lattices is performed by means of 
a \emph{genus symbol} 
developed by \textsc{Conway} and \textsc{Sloane} (\cite[Chapter~15]{SPLAG}). A mass formula, stated by the same authors in \cite{massformula} but dating back to contributions by  \textsc{Smith}
, \textsc{Minkowski} 
and \textsc{Siegel} 
provides effective bounds to the number of local invariants that need to be taken into account, thus making the enumeration computationally feasible.

Our main result, the complete list of single-class primitive $\MdZ$-genera, and representative lattices of each of these genera, has been incorporated into the Catalogue of Lattices \cite{latdb}. Tables containing only the genus symbols, and only in dimension $\geq 4$, can be found in the appendix.

An additional contribution of this work is a number of essential algorithms for computation on genera of $\MdZ$-lattices. All of these algorithms have been implemented in {\textsc{Magma}} \cite{magma} and are available on request.

%


\section{Preliminaries}

\subsection*{Genera of lattices}\label{subsec:preliminaries}

We denote by $\MdP$ the set of rational primes. In the following, let $R=\MdZ$ or $R=\MdZ_p$ for some $p\in \MdP$. By an $R$-{lattice} $L$ we mean a tuple $(L,\beta)$, where $L=\erz{b_1,\dots,b_n}$ is a free $R$-module of finite rank $n$ and $\beta:L\times L\rarr \Quot(R)$ is a symmetric bilinear form. Additionally, if $R=\MdZ$, we assume that $\beta$ is positive definite. $L$ is {integral} if $\beta(L,L)\subseteq R$. An integral $R$-lattice is called {even} if $\beta(L,L)\subseteq 2R$, and {odd} otherwise. $L$ is quadratic-form-maximal if either $L$ is even and the associated quadratic form is maximal integral, or if $L$ is odd and the quadratic form associated to $2L$ is maximal integral. By $\Aut(L)$ we mean the group of bijective isometries of $L$.

The {Gram matrix} of $L$ is $\Gram(L):=(\beta(b_i,b_j))_{1\leq i,j\leq n}$. By the determinant $\det(L)$ we mean $\det(\Gram(L))$, and the discriminant of $L$ is $\disc(L)=(-1)^s\det(L)$ where $s:=\left\lceil\frac{\rank(L)}{2}\right\rceil$.

Recall the definition of the genus of a lattice: two $\MdZ$-lattices $L$, $L'$ are said to be in the same genus if their completions $\MdZ_p\otimes_\MdZ L$ and $\MdZ_p\otimes_\MdZ L'$ are isometric for every $p\in\MdP\cup\{-1\}$, with the convention that $\MdZ_{-1}:=\MdR$. It is well known that genera of lattices are finite unions of isometry classes. We denote the genus of $L$ by $\Gen(L)$ and call the number of isometry classes contained in $\Gen(L)$ the class-number of~$L$.

If $L$ is an $R$-lattice with Gram matrix $G=(g_{ij})_{1\leq i,j\leq n}\in R^{n\times n}$, we call $L$ {primitive} if $\gcd(\{g_{ij}:\ 1\leq i,j\leq n\})=1$. By $\Rescale(L)$ we mean the unique scaled lattice ${^\alpha}L:=(L,\alpha\beta)$ with the property that $\Gram({^\alpha}L)$ is integral and primitive.

By $L^\#=\{ax:\ x\in L, a\in \Quot(R), \beta(ax, L)\subseteq R\}$ we mean the dual of~$L$. Whenever, for some integer $a\in\MdN$, $aL^\# = L$, we call $L$ $a$-modular. If a $\MdZ$-lattice $L$ is integral, or equivalently if $L\subseteq L^\#$, we will call $L$ {$p$-adically square-free} whenever $\exp(L^\#/L) \not\equiv 0\mod p^2$. Here, $\exp(L^\#/L)$ denotes the exponent of the finite abelian group $L^\#/L$. If $L$ is $p$-adically square-free for all $p\in\MdP$, we will call $L$ square-free.

\subsection*{Jordan splitting}We remind the reader of the {Jordan splitting} of $\MdZ_p$-lattices: let $L$ be a $\MdZ$-lattice and $p\in\MdP$. Then  $\MdZ_p\otimes_\MdZ L$ admits an orthogonal splitting $\MdZ_p\otimes_\MdZ L=\bot_{i\in\MdZ}L_i$ with $L_i$ $p^i$-modular (but possibly zero-dimensional). When $L$ is integral, $i$ ranges over $\MdN_0$ only. 

For $2\not=p\in\MdP$, the $p$-adic Jordan decompositions of $L$ are unique up to isometry of the components. Much of the complication in computations with genera of lattices arises from the fact that a $\MdZ_2$-lattice can have many essentially different Jordan decompositions.

The dimensions of the $L_i$ are, however, unique even in the case $p=2$. If $a=\min\{i\in\MdZ:\ \rank(L_i)>0\}$ and $b=\max\{i\in\MdZ:\ \rank(L_i)>0\}$, then we define the Jordan decomposition's {length} to be $\text{len}_p(L):=b-a+1$. By $\Jordan_p(L)=L_a\bot \cdots\bot L_b$ we mean that the right side is {a} $p$-adic Jordan decomposition of $\MdZ_p\otimes_\MdZ L$, with $L_i$ a $p^i$-modular lattice for each $a\leq i\leq b$.

If $L$ is $p$-adically square-free, then clearly all $L_i$ have dimension $0$ for $i\geq 2$. If $L$ is square-free and, in addition, for all $p\in \MdP$ we have $\Jordan_p(L)=L_0\bot L_1$ with $\rank(L_0)\geq \rank(L_1)$, we call $L$ {strongly primitive}.

\subsection*{The genus symbol}

There have been many descriptions of complete sets of real and $p$-adic invariants of genera of $\MdZ$-lattices $L$. We will adopt the notion of a {genus symbol} $\text{sym}(L)$ that has been put forth by \textsc{Conway} and \textsc{Sloane} in \cite[Chapter~15]{SPLAG}, because it appears to us to be the most concise in terms of understanding and computing the $2$-adic invariants. 

We assume basic familiarity with this symbol and only note that $\sym(L)$ is a formal product of lists of tuples for each prime $p$ dividing $2\det(L)$, with each tuple containing $i$, $\rank(L_i)$, $\det(L_i)\bmod{(\MdZ_p^*)^2}$ corresponding to a $p^i$-modular orthogonal summand $L_i$ of $\Jordan_p(L)$, and (for $p=2$) an invariant taking either a value based on $\text{trace}(L_i)\bmod{8}$ (called the {oddity} of $L_i$), or the value ``II'' if $L_i$ is an even $\MdZ_2$-lattice. These tuples are abbreviated as $(p^i)^{\varepsilon \rank(L_i)}_\text{oddity}$, with the subscript present only for $p=2$. For $p\not=2$, $\varepsilon\in\{-1,1\}\cong (\MdZ_p^*)/(\MdZ_p^*)^2$ denotes the square-class of $\det(L_i)$. For $p=2$, we write $\varepsilon=-1$ for $\det(L_i)\equiv\pm 3\bmod 8$, and $\varepsilon=1$ otherwise. The value $\det(L_i)\in (\MdZ_2^*)/(\MdZ_2^*)^2\cong C_2\times C_2$ can then be obtained from $\varepsilon$ and the oddity.

Whenever a set of local invariants satisfies the existence conditions given in \cite[Theorem 11]{SPLAG}, a $\MdZ$-lattice with these local invariants exists.

\subsection*{The mass formula}
\label{subsec:massformula}

We assume basic familiarity with the \textsc{Minkowski-Siegel} mass formula, which relates the \emph{mass} of a $\MdZ$-lattice $L$: $$\mass(L)=\sum_{[L_i]\in \Gen(L)}\frac{1}{\#\Aut(L_i)}$$ to the local invariants comprising $\text{sym}(L)$. Recall that all $\MdZ$-lattices $L$ in this article are definite, hence $\Aut(L)$ is always finite.


\begin{definition}\label{defi:masscondition}
We say that a lattice $L$ \emph{fulfils the mass condition} if $\mass(L)\leq \frac{1}{2}$ and $\frac{1}{\mass(L)}\in 2\MdZ$. 
\end{definition}

Clearly, if $L$ is a single-class lattice, then $L$ must fulfil the mass condition. 


We will use a modern formulation of the mass formula, put forth by \textsc{Conway} and \textsc{Sloane} in \cite{massformula}. The following paragraph gives a simplified overview of those parts of the mass formula which are relevant to our computations:

\subsection*{Mass calculation and approximation}
When all local invariants are trivial, i.e. $\det(L)\in\MdZ^*$, the mass of a lattice in dimension $2\leq n \in\MdN$, and of discriminant $D=(-1)^s\det(L)$ where $s:=\left\lceil\frac{n}{2}\right\rceil$, is the \emph{standard mass}: $$\std_n(D) = 2\pi^{-n(n+1)/4}\prod_{j=1}^n\Gamma\left(\frac{j}{2}\right)\left(\prod_{i=1}^{s-1}\zeta(2i)\right)\zeta_D(s).$$ 

Here $\Gamma$ and $\zeta$ denote the usual Gamma and Zeta functions, and $$\zeta_D(s)=\begin{cases}1, & n\text{\ odd}\\ \prod_{2\not=p\in\MdP}\frac{1}{1-\left[\frac{D}{p}\right]p^{-s}}, &\text{otherwise}\end{cases}.$$

where $\left[\frac{D}{p}\right]$ is the \textsc{Legendre} symbol. Finally, $\mass(L)$ is obtained from the standard mass by multiplying with correction factors at all primes $p$ dividing $2\det(L)$. Unlike the standard mass, these depend on the $p$-adic Jordan decomposition of~$L$. Let $\Jordan_p(L)=L_0\bot L_1\bot \dots \bot L_k$ and $s_i := \left\lceil\frac{\rank(L_i)}{2}\right\rceil$, $i=0,\dots,k$. Then

$$\mass(L) = \std_n(D)\cdot\prod_{p| 2\det(L)}\left(m_p(L)\cdot\underbrace{2\prod_{j=2}^s(1-p^{2-2j})}_{=:\  \std_p(L)\iv} \right) \text{\ where\ }$$ 

$$m_p(L)=\left(\prod_{i\in\MdZ:\ \rank(L_i)\not=0}M_p(L_i)\right)\left(\prod_{i<j\in\MdZ}p^{\frac{1}{2}(j-i)\rank(L_i)\rank(L_j)}\right)\hspace{1em}\text{\ (for\ } p\not=2), \text{\ with}$$

$$M_p(L_i)=\frac{1}{2}(1+\varepsilon p^{-{s_i}})\iv\prod_{i=2}^{s_i}(1-p^{2-2i})\iv.$$ 

Here, $\varepsilon=0$ if $\rank(L_i)$ is odd, and otherwise $\varepsilon\in\{-1,1\}$ depends on the species of the orthogonal group $\mathcal{O}_{\rank(L_i)}(p)$ associated with $L_i$, which can be determined from the $p$-adic invariants of $\Gen(L)$.

\section{Single-class lattices}

A fundamental method used in \textsc{Watson}'s classifications is a strategy of descent that transforms a primitive integral quadratic form $f$ into another form $f'$ whose corresponding $\MdZ$-lattice has shorter Jordan decomposition at a given $p\in\MdP$. For the original definition in terms of quadratic forms, cf. \cite[Section~2]{watsontransformations}. We formulate a similar strategy for $\MdZ$-lattices:

\begin{definition}
For $p\in\MdP$, the \emph{Watson mapping}  $\Watp$ is defined by \[\Watp(L) := \Rescale(L\cap pL^\#).\]
\end{definition}

\begin{remark}\label{thm:watsonProcess}The following properties of the $\Watp$ mappings justify the term ``strategy of descent'':
\begin{enumerate}
\item The $\Watp$ mappings are compatible with isometries and extend to well-defined functions on genera of $\MdZ$-lattices with the property $\Gen(\Watp(L)) = \Watp(\Gen(L))$. In particular, the $\Watp$ do not increase the class-number.
\item $\text{len}_p(\Watp(L)) \leq \max\{\text{len}_p(L)-1, 2\}$. Hence, repeatedly applying $\Watp$ decreases the length of a lattice's $p$-adic Jordan decomposition until a $p$-adically square-free lattice $L'$ is reached.
\end{enumerate}
\begin{proof}For details on (2), cf. \cite[(7.4) and (8.4)]{watsontransformations}.\end{proof}
\end{remark}

We will make use of Watson's work one more time by citing the following result:

\begin{theorem}\label{thm:watsonBound}If $L$ is a single-class $\MdZ$-lattice, then $\rank(L)\leq 10$.
\begin{proof}Cf.~\cite{watsonclassnumber}.
\end{proof}
\end{theorem}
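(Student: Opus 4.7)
The plan is to derive the bound from the \textsc{Minkowski-Siegel} mass formula combined with Watson's descent from Remark~\ref{thm:watsonProcess}. Since $\{\pm\text{id}\}\subseteq\Aut(L)$, any single-class lattice satisfies $\mass(L)=1/|\Aut(L)|\leq 1/2$, so it suffices to show that $\mass(L)>1/2$ whenever $\rank(L)\geq 11$.

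First, one reduces to the square-free case: by part~(1) of Remark~\ref{thm:watsonProcess}, iterating $\Watp$ over all primes $p$ preserves the class-number, and by part~(2) the process terminates at a square-free lattice, i.e.\ one with $\Jordan_p(L)=L_0\bot L_1$ for every prime $p$. Replacing $L$ by its Watson descendants, it therefore suffices to derive a contradiction for a square-free single-class lattice of rank $n\geq 11$.

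Next, I would estimate $\mass(L)$ from below using the explicit form of $\std_n(D)$. Stirling's formula shows that $2\pi^{-n(n+1)/4}\prod_{j=1}^n \Gamma(j/2)$ grows super-exponentially in $n$, while $\prod_{i=1}^{s-1}\zeta(2i)\geq 1$ and $\zeta_D(s)\geq \prod_{p\neq 2}(1+p^{-s})^{-1}$ is bounded below by an absolute positive constant. For the local correction factors $m_p(L)\cdot\std_p(L)^{-1}$, square-freeness restricts the admissible Jordan symbols at each $p$ to a short finite list, and an explicit case analysis produces a uniform lower bound depending only on $p$; since this bound tends to $1$ as $p\to\infty$, only a bounded set of small primes needs individual attention.

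The main obstacle is assembling these ingredients into a uniform lower bound sharp enough to exclude rank $n\geq 11$ for \emph{every} admissible combination of local symbols simultaneously, particularly at $p=2$ where the Jordan decomposition carries additional invariants such as the oddity, and where the correction factors can be smallest. In practice one enumerates the finite list of possible square-free local symbols at each small prime, inserts them into the mass formula, and verifies numerically that $\mass(L)>1/2$ whenever $n\geq 11$. This is morally the content of Watson's argument in \cite{watsonclassnumber}, who achieved the same bound by technically delicate but more elementary estimates on the underlying quadratic forms without invoking the mass formula directly.
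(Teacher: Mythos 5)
Your reduction to the square-free case via the Watson maps, and the observation that a single-class lattice must satisfy $\mass(L)\leq\frac{1}{2}$, are both sound. The fatal problem is the core quantitative claim: it is simply false that $\mass(L)>\frac{1}{2}$ for every square-free lattice of rank $n\geq 11$, and no refinement of the local estimates can repair this. A concrete counterexample is the genus of the standard lattice $\MdZ^{11}$: it is primitive and unimodular (hence square-free) of rank $11$, its mass is $\frac{1}{2^{11}\cdot 11!}+\frac{1}{48\cdot\#W(E_8)}\approx 1.2\cdot 10^{-11}$, vastly smaller than $\frac{1}{2}$, and yet its class number is $2$ (the classes of $\MdZ^{11}$ and $E_8\bot\MdZ^{3}$). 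The asymptotic you invoke is real but kicks in far too late: the logarithm of $2\pi^{-n(n+1)/4}\prod_{j=1}^{n}\Gamma(j/2)$ behaves like $\frac{n^2}{4}\log\bigl(n/(2\pi e^{3/2})\bigr)$, which is negative until $n\approx 28$, so the standard mass remains minuscule throughout $11\leq n\lesssim 27$. More structurally, the mass formula can only certify $h\geq 2$ when $\mass(L)>\frac{1}{2}$ (each class contributes at most $\frac{1}{2}$ to the mass), so it is in principle incapable of detecting multi-class genera of small mass, which is exactly the situation in dimensions $11$ through roughly $27$.

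This is why the paper does not argue this way: it takes Theorem~\ref{thm:watsonBound} as an external input, citing Watson, whose proof is elementary and constructive rather than mass-theoretic --- for every positive form in at least $11$ variables he exhibits a second class in the same genus directly. Within the paper, the mass condition of Definition~\ref{defi:masscondition} is used only as a \emph{necessary} condition to bound determinants and prime divisors inside the dimension range $3\leq n\leq 10$ already guaranteed by Watson's theorem, never to establish that range itself.
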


\section{Strategy}

Let $L$ be a single-class square-free primitive $\MdZ$-lattice with $\rank(L)=n \geq 3$. We will see in Section~\ref{sec:sqf} that 
the Smith-Minkowski-Siegel mass formula yields an upper bound $\maxprime(n)$ to the set of prime divisors of $\det(L)$. The bound $\maxprime(n)$ depends on $n$ alone (and not on $L$). For the relevant dimensions $3\leq n\leq 10$, its values are given in Table~\ref{table:maximalPrime}. Since $\det(L)$ is thus a-priori bounded and $L$ is square-free, there is only a finite number of possibilities for the genus symbol $\sym(L)$. An algorithm introduced in Section~\ref{constructionoflattices} constructs a $\MdZ$-lattice from a given square-free genus symbol, allowing an enumeration of all single-class square-free $\MdZ$-lattices in any dimension $n\geq 3$.

A second algorithm, based on the $\Watp$ mappings and described in Section~\ref{sec:all}, then yields the complete list of (not necessarily square-free) single-class primitive $\MdZ$-lattices.

\section{Square-free lattices}\label{sec:sqf}

\subsection{Generation of candidate genera}

\begin{lemma}\label{lemma:zeta}
Let $1<s\in\MdN$, $D\in\MdZ$. Then $\zeta_D(s)\geq \frac{\zeta(2s)}{\zeta(s)}$, with $\zeta_D$ defined as in Section~\ref{subsec:massformula}.

\begin{proof}Let $\lambda: \MdN\rarr\{-1,1\}$, $n\mapsto (-1)^{\#\{p\in\MdP:\ p| n\}}$ denote the \textsc{Liouville} function. Then 
\[\zeta_D(s) = \prod_{2\not=p\in \MdP}{\left(1-\left[\frac{D}{p}\right]\frac{1}{p^s}\right)}\iv \geq \prod_{p\in\MdP}\left(1+\frac{1}{p^s}\right)\iv = \sum_{n=1}^\infty\frac{\lambda(n)}{n^s}\]

which converges for any $s>1$. Multiplying by $\zeta(s)$, and writing $\ast$ for {\textsc{Dirichlet}} convolution: \[\zeta(s)\cdot\left(\sum_{n=1}^\infty\frac{\lambda(n)}{n^s}\right) = \sum_{n=1}^\infty\frac{1\ast \lambda(n)}{n^s}=\sum_{n=1}^\infty\frac{\sum_{d|n}\lambda(\frac{n}{d})}{n^s} = \sum_{n=1}^\infty\frac{1}{{(n^2)}^s}=\zeta(2s).\]
\end{proof}

\end{lemma}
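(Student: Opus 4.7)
The plan is to reduce the inequality to an identity of Dirichlet series evaluated via the Liouville function. First I would start from the Euler product
\[
\zeta_D(s)\iv = \prod_{p\neq 2}\left(1-\left[\frac{D}{p}\right]p^{-s}\right).
\]
Since the Legendre symbol $\left[\frac{D}{p}\right]$ takes values in $\{-1,0,+1\}$, each factor on the right is at most $1+p^{-s}$; adjoining the missing Euler factor at $p=2$ with the pessimistic value $1+2^{-s}$ only enlarges the product further. Taking reciprocals yields the key inequality
\[
\zeta_D(s) \ge \prod_{p\in\MdP}(1+p^{-s})\iv,
\]
and everything after this point will be an exact identity.

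Next I would recognize the right-hand side as the Euler product of $\sum_{n\ge 1}\lambda(n)/n^s$, where $\lambda$ denotes the Liouville function. Because $\lambda$ is completely multiplicative with $\lambda(p)=-1$, the geometric-series identity $(1+p^{-s})\iv = \sum_{k\ge 0}\lambda(p^k)p^{-ks}$ holds at each prime, and absolute convergence for $s>1$ legitimates the formal expansion. Multiplying by $\zeta(s)$ converts the resulting Dirichlet series into that of the convolution $\mathbf{1} \ast \lambda$. Evaluating this convolution at a prime power $p^n$ as $\sum_{k=0}^n(-1)^k$ shows that $(\mathbf{1}\ast\lambda)(n)$ equals $1$ when $n$ is a perfect square and $0$ otherwise. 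Therefore
\[
\zeta(s)\sum_{n\ge 1}\frac{\lambda(n)}{n^s} = \sum_{m\ge 1}\frac{1}{m^{2s}} = \zeta(2s),
\]
which combined with the key inequality gives $\zeta_D(s)\ge \zeta(2s)/\zeta(s)$.

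The main obstacle, such as it is, lies in the convolution identity $\mathbf{1}\ast\lambda = \chi_{\text{squares}}$; everything else is either a standard Euler-product manipulation or a direct sign observation on the Legendre symbol. Once that identity is verified at prime powers (using that $\mathbf{1}\ast\lambda$ is multiplicative), the lemma follows from one rearrangement. A small sanity check is that the hypothesis $s>1$ is used twice: once for the absolute convergence of $\sum\lambda(n)/n^s$, and once implicitly so that $\zeta(s)$ is finite and the rearrangement is valid.
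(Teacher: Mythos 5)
Your proposal is correct and follows essentially the same route as the paper: bound each Euler factor of $\zeta_D(s)$ below by $(1+p^{-s})^{-1}$, identify the resulting product with the Dirichlet series of the Liouville function, and use the convolution identity $\mathbf{1}\ast\lambda=\chi_{\text{squares}}$ to conclude $\zeta(s)\sum_n\lambda(n)n^{-s}=\zeta(2s)$. Your treatment is if anything slightly more careful than the paper's (explicitly justifying the prime-power evaluation of the convolution and the handling of the missing factor at $p=2$).
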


\begin{corollary}
\label{corollary:lowerbound}
Whenever $2 < n=2s$ is an even number, there is a lower bound $s(n)$ to the standard mass of a lattice of dimension $n$, independent of its discriminant~$D$:

\[\std_n(D) \geq s(n):= 2\pi^{\frac{-n(n+1)}{4}}\frac{1}{\zeta(s)}\prod_{j=1}^n\Gamma\left(\frac{j}{2}\right)\prod_{j=1}^{s}\zeta(2j).\]
\end{corollary}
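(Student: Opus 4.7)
The plan is to simply substitute the bound from Lemma~\ref{lemma:zeta} into the explicit formula for $\std_n(D)$ given in the mass calculation overview. Since $n=2s$ is even, $\lceil n/2 \rceil = s$, so by the formula in Section~\ref{subsec:massformula},
\[
\std_n(D) = 2\pi^{-n(n+1)/4}\prod_{j=1}^n\Gamma\!\left(\tfrac{j}{2}\right)\left(\prod_{i=1}^{s-1}\zeta(2i)\right)\zeta_D(s),
\]
and $\zeta_D(s)$ is the $D$-dependent Euler product that featured in the lemma (we need $s > 1$, which is ensured by $n > 2$, so the lemma applies).

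Next, I would apply Lemma~\ref{lemma:zeta} to bound the only $D$-dependent factor from below: $\zeta_D(s) \geq \zeta(2s)/\zeta(s)$. Since every other factor in $\std_n(D)$ is strictly positive and independent of $D$, this inequality may be substituted directly, giving
\[
\std_n(D) \;\geq\; 2\pi^{-n(n+1)/4}\prod_{j=1}^n\Gamma\!\left(\tfrac{j}{2}\right)\left(\prod_{i=1}^{s-1}\zeta(2i)\right)\frac{\zeta(2s)}{\zeta(s)}.
\]
Absorbing $\zeta(2s)$ into the product as the missing $j=s$ term, and pulling out the factor $1/\zeta(s)$, yields exactly the claimed expression $s(n)$.

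There is essentially no obstacle here: Lemma~\ref{lemma:zeta} does the real work, and what remains is a one-line algebraic rearrangement together with the observation that $\lceil n/2\rceil = s$ for even $n$. The only point worth flagging is the hypothesis $n > 2$, which guarantees $s \geq 2$ so that $\zeta(s)$ is finite and Lemma~\ref{lemma:zeta} is applicable.
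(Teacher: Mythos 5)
Your proof is correct and is exactly the argument the paper intends (the corollary carries no written proof precisely because it is this immediate substitution of Lemma~\ref{lemma:zeta} into the formula for $\std_n(D)$, with $\zeta(2s)$ absorbed as the $j=s$ factor of the product). Nothing to add.
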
 

We note that this approach fails for $n=2$, since no similar approximation of $\zeta_D(1)$ is possible. For odd $n$, by definition, $\std_n(D)$ does not depend on $D$ at all.

\begin{lemma}\label{lemma:terminationSCSF}

Let $K$ and $L$ be square-free primitive lattices in dimension $n\geq 3$ whose local invariants differ only at a single prime $p\in \MdP$, where $K$ has trivial invariants and $L$ does not (i.e. $\Jordan_p(K)=K_0$, and $\Jordan_p(L)=L_0\bot L_1$ with $\rank(L_0)>0$ and $\rank(L_1)>0$). Let $s:=\left\lceil\frac{n}{2}\right\rceil$. 

Then ${\mass(L)}\geq a_n(p)\cdot \mass(K)$, where $$a_n(p)= \varepsilon \left(\frac{1}{1+p\iv}\right)^2(\sqrt{p})^{n-1}(1-p^{-2})^{s-1}$$
The factor $\varepsilon$ is $\frac{\zeta(2s)}{{\zeta(s)}^2}$ if $n$ is even, and $1$ otherwise.

\begin{proof}

Let $\Jordan_p(L)=L_0\bot L_1$, $s:=\left\lceil\frac{\rank(L)}{2}\right\rceil$ and $s_k:=\left\lceil\frac{\rank(L_k)}{2}\right\rceil$, $k=0,1$. Then $s \geq 2, s_0, s_1 \geq 1$ and $s_0+s_1\leq s+1$. We have $$\std_p(L)\leq\frac{1}{2}\left(1-p^{-2}\right)^{1-s}$$
and for $k=0, 1$: 
$$M_p(L_k)\geq \frac{1}{2}\left(\frac{1}{1+p\iv}\right)\left(1-p^{2-2s_k}\right)^{1-s_k}$$



Since $(1-p^{2-2s_k})^{1-s_k}>1$ for any $s_k>1$, 

$$\frac{m_p(L)}{\std_p(L)} = M_p(L_0)M_p(L_1)(\sqrt{p})^{\rank(L_0)\rank(L_1)}\frac{1}{\std_p(L)}$$

$$\geq \frac{1}{2}\left(\frac{1}{1+p\iv}\right)^2(\sqrt{p})^{n-1}(1-p^{-2})^{s-1}.$$

Finally, we have $m_q(L)=m_q(K)$ and $\std_q(L)=\std_q(K)$ for all $p\not=q\in\MdP$. Applying Lemma~\ref{lemma:zeta} and the trivial inequality $\zeta_D(s)\leq \zeta(s)$ to the standard masses, we obtain, for $n$ even:
 $$\frac{\mass(L)}{\mass(K)}=\frac{\std_n(\disc(L))}{\std_n(\disc(K))}\cdot\frac{m_p(L)}{\std_p(L)} \geq\frac{\zeta(2s)m_p(L)}{\zeta(s)^2\std_p(L)}.$$
 
For odd $n$, the factor $\zeta_D$ is not present in the standard mass, so in that case $$\frac{\mass(L)}{\mass(K)}\geq \frac{m_p(L)}{\std_p(L)}.$$
\end{proof}
\end{lemma}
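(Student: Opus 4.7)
My plan is to take the ratio $\mass(L)/\mass(K)$ and exploit the fact that all local data except at $p$ is identical. Applying the mass formula of Section~\ref{subsec:massformula}, the correction factors $m_q(L)\std_q(L)\iv$ for $q\neq p$ cancel pairwise with those of $K$; since $K$ is $p$-unimodular and square-free, for $p$ odd the prime $p$ does not divide $\det(K)$, so $K$ contributes no correction at $p$ at all. What remains is
\[\frac{\mass(L)}{\mass(K)} = \frac{\std_n(D_L)}{\std_n(D_K)} \cdot \frac{m_p(L)}{\std_p(L)}.\]

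For the ratio of standard masses, the odd-$n$ case is immediate because $\zeta_D$ is then absent and the ratio equals $1$. For even $n$, Lemma~\ref{lemma:zeta} yields $\zeta_{D_L}(s) \geq \zeta(2s)/\zeta(s)$, while the trivial Euler-product bound gives $\zeta_{D_K}(s) \leq \zeta(s)$; combining these produces the factor $\zeta(2s)/\zeta(s)^2$ asserted as $\varepsilon$ in the statement.

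The heart of the argument is the lower bound on $m_p(L)/\std_p(L)$. I would first bound $\std_p(L)$ from above by $\tfrac{1}{2}(1-p^{-2})^{1-s}$ by replacing each factor $(1-p^{2-2j})$ of $\std_p(L)\iv$ (for $j=2,\dots,s$) by the smallest one, namely $1-p^{-2}$. Next, for each $M_p(L_k)$ I would upper-bound $1+\varepsilon_k p^{-s_k}$ by $1+p^{-1}$, which is valid since $s_k\geq 1$ and $\varepsilon_k \in \{-1,0,1\}$, and discard the factors $(1-p^{2-2i})\iv \geq 1$ in the product, yielding the uniform estimate $M_p(L_k) \geq \tfrac{1}{2}(1+p^{-1})\iv$. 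Finally I would use the cross term $(\sqrt{p})^{\rank(L_0)\rank(L_1)}$ appearing in $m_p(L)$ together with the inequality $\rank(L_0)\rank(L_1) \geq n-1$, which follows from $\rank(L_0),\rank(L_1) \geq 1$ and $\rank(L_0)+\rank(L_1)=n$ (the product is minimized when one rank is $1$). Multiplying these three estimates together yields the claimed bound.

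The main obstacle is careful bookkeeping of exponents and correctly identifying the minimizing configuration — in particular, the constraint $s_0+s_1 \leq s+1$ coming from square-freeness is what controls the telescoping of the $(1-p^{2-2j})$-type factors and thus the final shape of $a_n(p)$. A minor additional point is the case $p=2$: the local mass formula displayed above is stated only for odd primes, but an analogous bound follows from the $2$-adic version of the formula by the same strategy, so only routine modifications are needed.
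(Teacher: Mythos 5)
Your plan reproduces the paper's own argument essentially step for step: the same cancellation of the local factors away from $p$, the same bounds $\std_p(L)\leq\tfrac{1}{2}(1-p^{-2})^{1-s}$ and $M_p(L_k)\geq\tfrac{1}{2}(1+p\iv)\iv$, the same use of $\rank(L_0)\rank(L_1)\geq n-1$ for the cross term, and the same combination of Lemma~\ref{lemma:zeta} with $\zeta_D(s)\leq\zeta(s)$ to handle the standard masses for even $n$. The only difference is your explicit caveat about $p=2$ (which the paper leaves implicit, and which is harmless since the bound is only ever invoked for odd primes), so this is correct and essentially identical to the paper's proof.
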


\begin{proposition}Let $3\leq n\in\MdN$. Then there is a bound $\maxprime(n)\in\MdN $ such that for any primitive $\MdZ$-lattice $L$ of rank~$n$ which is square-free and fulfils the mass condition (cf. \ref{defi:masscondition}), $\max\{p\in\MdP:\ p\mid \det(L)\}\leq \maxprime(n)$.
\begin{proof}
Let $s(n)$ denote the lower bound to $\std_n(D)$ obtained from Corollary~\ref{corollary:lowerbound}. 

The quantities $m_2(L)$ and $\std_2(L)$ from the mass formula, as defined in \cite[Chapter~15]{SPLAG}, depend only on the $2$-adic genus invariants of~$L$, for which there are finitely many possibilities when $L$ is square-free. Hence $t(n):=\min\{m_2(L)\std_2(L)\iv:\ L\ \mathrm{a\ primitive,\ squarefree}\ \MdZ-\mathrm{lattice\ of\ rank}\ n\}$ is well-defined.

Further, the bound $a_n(p)$ from Lemma~\ref{lemma:terminationSCSF} is increasing in both $n$ and $p$, and $\lim_{p\rightarrow \infty}a_n(p)=\infty$. Let $B(n) := \{2\not=p\in\MdP:\ a_n(p)<1\}$.

Now let $L$ be any $\MdZ$-lattice that is single-class, primitive and square-free. Write $s:=\left\lceil\frac{n}{2}\right\rceil$ and $D:=(-1)^s\det(L)$. Then by the mass formula, and using Lemma~\ref{lemma:terminationSCSF} to compare $\mass(L)$ to the mass of the standard lattice, \begin{eqnarray*}\mass(L)=\std_n(D)\cdot \prod_{p| 2\det(L)}\left(m_p(L)\std_p(L)\iv\right) \geq s(n)t(n)\prod_{2\not=p\mid\det(L)}{a_n(p)}.\end{eqnarray*} 
Hence, if $p\mid\det(L)$ for some $2< p\in \MdP - B(n)$, then \[\mass(L)\geq a_n(p)\cdot s(n)t(n)\prod_{q\in B(n)}{a_n(q)}.\]
So, $\maxprime(n):= \max(\{p\in\MdP: a_n(p)\cdot s(n)t(n)\prod_{q\in B(n)}{a_n(q)} \leq \frac{1}{2}\})$ is the desired bound.

\end{proof}
\end{proposition}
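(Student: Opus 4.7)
The plan is to lower-bound $\mass(L)$ by a quantity that tends to infinity with the largest prime divisor of $\det(L)$, then invert the mass condition $\mass(L)\leq \tfrac{1}{2}$ to recover an explicit bound. Concretely, I would start from the decomposition
$$\mass(L) = \std_n(D)\cdot \frac{m_2(L)}{\std_2(L)} \cdot \prod_{\substack{p\mid\det(L)\\ p>2}} \frac{m_p(L)}{\std_p(L)}$$
and lower-bound each of the three types of factors uniformly over the relevant parameter sets.

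For the global factor, Corollary~\ref{corollary:lowerbound} gives $\std_n(D)\geq s(n)$ when $n$ is even; for $n$ odd the formula for $\std_n(D)$ carries no $\zeta_D$ contribution and therefore depends only on $n$, so a uniform lower bound is automatic. For the $2$-adic factor, square-freeness forces $\Jordan_2(L)=L_0\bot L_1$, so in fixed rank $n$ the $2$-adic genus symbol ranges over a finite set; one defines $t(n)$ to be the minimum of $m_2(L)/\std_2(L)$ over that set. For each odd prime $p\mid \det(L)$, Lemma~\ref{lemma:terminationSCSF} (comparing $L$ to a lattice identical to $L$ away from $p$ but with trivial $p$-adic invariants there) supplies a lower bound on the local factor $m_p(L)/\std_p(L)$ of the form $a_n(p)$. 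Combining the three estimates yields
$$\mass(L)\;\geq\;s(n)\,t(n)\prod_{\substack{p\mid\det(L)\\ p>2}} a_n(p).$$

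The final step is to extract a bound on $p$. The explicit form of $a_n(p)$ is dominated by the factor $p^{(n-1)/2}$ (which, since $n\geq 3$, genuinely grows with $p$), so $a_n(p)\to\infty$ and the set $B(n):=\{p>2 : a_n(p)<1\}$ is finite and effectively computable. Pessimistically assuming that every prime in $B(n)$ divides $\det(L)$, the estimate above reads
$$\mass(L)\;\geq\;s(n)\,t(n)\Big(\prod_{q\in B(n)} a_n(q)\Big)\cdot a_n(p)$$
whenever some prime $p\notin B(n)\cup\{2\}$ divides $\det(L)$. Combined with $\mass(L)\leq\tfrac{1}{2}$ and the monotonicity of $a_n$ in $p$, this solves for the desired $\maxprime(n)$.

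The main delicate point will be the accounting of the factor $\varepsilon=\zeta(2s)/\zeta(s)^2$ that appears inside $a_n(p)$: a single invocation of Lemma~\ref{lemma:terminationSCSF} introduces one copy of $\varepsilon$, reflecting the one-time comparison of $\std_n(\disc(L))$ to $\std_n(\disc(K))$, so one must be careful not to double-count it when an iteration is interpreted as a single product over primes. Retaining one factor of $\varepsilon$ per odd prime divisor is a slightly pessimistic but legitimate shortcut; what is really needed is that this suffices to produce finite and practically small values of $\maxprime(n)$ throughout the range $3\leq n\leq 10$, which is what makes the whole strategy effective rather than merely finite.
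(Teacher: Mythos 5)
Your proposal follows essentially the same route as the paper: the same factorization of $\mass(L)$ into the standard mass (bounded below by $s(n)$ via Corollary~\ref{corollary:lowerbound}), the $2$-adic factor (bounded by the finite minimum $t(n)$), and the odd local factors (bounded by $a_n(p)$ via Lemma~\ref{lemma:terminationSCSF}), followed by the same pessimistic inclusion of all primes in $B(n)$ and inversion of the mass condition. Your closing remark about not double-counting the factor $\varepsilon=\zeta(2s)/\zeta(s)^2$ is a sensible sanity check on the per-prime bookkeeping, but it does not change the argument, which matches the paper's.
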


For the relevant dimensions, i.e. $3\leq n\leq 10$ (cf. Theorem~\ref{thm:watsonBound}), the values of $t(n)$, $B(n)$ and $\maxprime(n)$ are given in Table~\ref{table:maximalPrime}. Since the genus of a $\MdZ$-lattice $L$ is determined by local invariants associated to the primes $p|2\det(L)$, for each of which there are finitely many possibilities if $L$ is square-free, we now obtain:

\begin{corollary}
Let $3\leq n\in\MdN$. Then the number of genera of primitive, single-class square-free $\MdZ$-lattices of rank~$n$ is finite.
\end{corollary}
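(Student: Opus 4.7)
My plan is to combine the preceding proposition with the definition of the genus symbol to get a very coarse but finite upper bound on the number of possible genera.

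First, I would apply the proposition to conclude that if $L$ is a primitive, square-free $\MdZ$-lattice of rank $n$ fulfilling the mass condition, then every prime divisor of $\det(L)$ is at most $\maxprime(n)$. In particular, the set $S(n) := \{p \in \MdP : p \leq \maxprime(n)\} \cup \{2\}$ is finite and contains all primes $p$ at which $\Gen(L)$ can have nontrivial $p$-adic invariants. Any single-class lattice trivially fulfils the mass condition, so this applies in the situation at hand.

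Next, I would argue that for every $p \in S(n)$, there are only finitely many possible local genus symbols $\sym_p(L)$. Since $L$ is square-free, $\Jordan_p(L) = L_0 \bot L_1$ with $L_1$ a $p$-modular component; the ranks $(\rank(L_0), \rank(L_1))$ thus lie in the finite set $\{(n-k, k) : 0 \leq k \leq n\}$. For fixed ranks, the remaining data in each component of $\sym_p(L)$ consists of: the square class $\varepsilon \in \{-1, 1\}$ of each nonzero summand's determinant (for every $p$), and, for $p = 2$, the additional choice of oddity modulo $8$ or the type~II marker. All of these take values in finite sets, so the number of possible local symbols at $p$ is bounded by a constant $c_n(p) < \infty$.

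Finally, since $L$ is positive definite of fixed rank $n$, its invariant at the infinite place is determined. By the standard fact recalled in Section~\ref{subsec:preliminaries} that a genus of $\MdZ$-lattices is determined by its collection of local invariants (together with the real place), the genus $\Gen(L)$ is determined by the finite tuple $(\sym_p(L))_{p \in S(n)}$. Therefore the number of genera of primitive, single-class, square-free $\MdZ$-lattices of rank $n$ is at most $\prod_{p \in S(n)} c_n(p)$, which is finite. There is no serious obstacle here; the whole content of the statement lies in the preceding proposition bounding $\maxprime(n)$, and the corollary amounts to noting that the remaining invariants at each of the finitely many relevant primes range over finite sets.
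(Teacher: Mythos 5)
Your proof is correct and follows essentially the same route as the paper: the preceding proposition bounds the primes dividing $\det(L)$ by $\maxprime(n)$, and since the genus is determined by the local invariants at the primes dividing $2\det(L)$, each of which ranges over a finite set for square-free lattices, only finitely many genera can occur. The paper states this in a single sentence before the corollary; your version merely spells out the finiteness of the local data at each prime in more detail.
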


In the remainder of this section, we outline an algorithm to explicitly enumerate the genera of primitive, single-class square-free $\MdZ$-lattices.

\begin{table}[h]
\caption{\label{table:maximalPrime}Upper bound $\maxprime(n)$ for primes dividing $\det(L)$, where $L$ is a square-free $\MdZ$-lattice that fulfils the mass condition} 

\begin{tabular}{lllllllll}
\toprule
$n$ & $3$ & $4$ & $5$ & $6$ & $7$ & $8$ & $9$ & $10$\\
\midrule

$t(n)$ & $8\iv$ & ${24}\iv$ & $8\iv$ & ${72}\iv$ & ${16}\iv$ & ${272}\iv$ & ${32}\iv$ & ${1056}\iv$ \\
$B(n)$ & $\{3\}$ & $\{3\}$ & $\emptyset$ & $\emptyset$ & $\emptyset$ & $\emptyset$ & $\emptyset$ & $\emptyset$\\
$\maxprime(n)$ & $61$ & $467$ & $73$ & $283$ & $139$ & $373$ & $193$ & $421$\\
\bottomrule
\end{tabular}

\end{table}


%
%
%
%
%
%
%

\SetKwFunction{MinimalMass}{MinimalMass}

\begin{proposition}\label{prop:maxprime}
Let $u$ denote a $2$-adic genus symbol, $2\not=q\in\MdP$ and let $v$ denote a list of elements $[p_i,v_i]$, $1\leq i\leq k$, where $p_i\in\MdP$,  $p_i\not\in\{2, q\}$ and $v_i$ is a $p$-adic genus symbol. Denote by $\MinimalMass(u, v, q)$ a lower bound to the mass of any genus of a $\MdZ$-lattice which has the local invariants specified by $u$ and the $v_i$, and which has an unspecified but non-trivial invariant at the prime $q$.

Then the output of the following algorithm, called for an integer $n\geq 3$, contains all primitive single-class square-free genus symbols in dimension~$n$:
\end{proposition}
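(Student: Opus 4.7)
My plan would be to frame the proposition as a correctness statement for a branch-and-bound enumeration, and prove it in two pieces: \emph{completeness} (every single-class square-free genus symbol in dimension $n$ is enumerated) and \emph{soundness of pruning} (no valid candidate is discarded by the MinimalMass cut-off). By the preceding proposition, any such symbol has all odd-prime invariants supported in primes $p\le \maxprime(n)$, and square-freeness forces each local contribution $L_p$ to have length $\le 2$, so the set of local $p$-adic symbols at each prime is finite. Hence the search space -- a 2-adic symbol $u$ together with a finite list $v$ of non-trivial odd-prime contributions -- is a priori finite, and the algorithm we anticipate proceeds by iterating over odd primes $q=3,5,7,\dots,\maxprime(n)$ in increasing order and, at each stage, branching on whether to attach a non-trivial $q$-adic symbol to the current partial datum $(u,v)$.

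For \textbf{completeness}, I would argue by induction on the number of primes processed. The outer loop exhausts all 2-adic symbols $u$ of length $\le 2$ and rank~$n$; the recursion then exhausts, at each odd prime $q\le\maxprime(n)$, the finitely many square-free non-trivial $q$-adic local symbols plus the option ``trivial at $q$.'' Every complete genus symbol $\sym(L)$ of a single-class square-free lattice of rank~$n$ is obtained by some sequence of these choices, and the only way the algorithm could miss $\sym(L)$ is by pruning a prefix of that sequence.

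For \textbf{soundness of pruning}, the branch leading to $\sym(L)$ is cut off only if, at some intermediate prime $q$ at which $L$ has a non-trivial invariant, the algorithm evaluates $\MinimalMass(u, v, q) > \tfrac12$ for the partial datum $(u,v)$ matching $\sym(L)$ on primes $<q$. By definition of $\MinimalMass$, this inequality holds only if every genus sharing these invariants and having a non-trivial $q$-part has mass $>\tfrac12$, contradicting the mass condition for $L$. To extend this guarantee to primes processed after $q$, I would invoke Lemma~\ref{lemma:terminationSCSF}: for each prime $r \notin B(n)$ at which one adds a further non-trivial invariant, the mass is multiplied by at least $a_n(r)\ge 1$, so the lower bound only grows. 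This monotonicity is what makes the depth-first pruning correct.

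The \emph{main obstacle} is the set $B(n)$ of ``bad'' primes at which $a_n(p)<1$, i.e.\ where adding a non-trivial local invariant could in principle decrease the mass and therefore invalidate the naive pruning. From Table~\ref{table:maximalPrime}, $B(n)\subseteq\{3\}$ and is empty for $n\ge 5$, so this is a very small issue, but handling it rigorously is the crux of the proof. The natural remedy -- and what I expect the definition of $\MinimalMass$ to incorporate -- is to pre-multiply the partial mass estimate by a worst-case correction factor over the finitely many possible $q$-adic invariants for each $q\in B(n)$, or equivalently to branch on $B(n)$-primes eagerly at the top of the search tree and defer pruning until after these choices are fixed. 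Once $\MinimalMass$ is set up so that its value is indeed a valid lower bound in the presence of $B(n)$-primes, the completeness and soundness arguments above go through, and the proposition follows.
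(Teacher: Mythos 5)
Your proposal is correct and follows essentially the same route as the paper: the paper's (much terser) proof likewise rests on the finiteness of the local-invariant lists for square-free lattices, on $\MinimalMass$ being a valid lower bound ``evaluated similarly to $\maxprime(n)$'' — which is precisely where the worst-case factor $\prod_{q\in B(n)}a_n(q)$ that you identify as the crux is absorbed — and on its monotonicity in $q$ for fixed $(u,v)$ to justify the pruning. Your write-up simply makes explicit the completeness/soundness split and the role of $B(n)$ that the paper delegates to the preceding proposition.
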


\begin{algorithm}[H]\label{alg:candidates}
\caption{generating candidate genera for single-class square-free lattices}
\LinesNumbered
\DontPrintSemicolon
\SetKwInOut{Input}{input}
\Input{An integer $n\geq 3$}
\SetKwInOut{Output}{output}
\Output{A list of square-free genus symbols}
\SetKwFunction{NextPrime}{NextPrime}

\SetKwFunction{Mass}{Mass}

\SetKwFunction{IsValidGenusSymbol}{IsValidGenusSymbol}

\SetKw{Append}{append}

\SetKw{Continue}{continue}
\SetKwData{OddSym}{oddsym}
\SetKwData{TwoSym}{twosym}

\Begin{
$\texttt U\longleftarrow$ list of possible $2$-adic invariants for square-free lattices\;
$\texttt V\longleftarrow$ list of possible $p$-adic invariants for square-free lattices, for $2\not=p\in\MdP$\;
$\texttt O\longleftarrow \emptyset$, $\texttt T\longleftarrow \emptyset$\;
\lForEach{$\TwoSym \in \texttt U$}{add $[\TwoSym, \emptyset, 3]$ to $\texttt T$}\;
\While{$\texttt T\not=\emptyset$}{
remove an element $[\texttt{twosym}, \texttt{oddsym}, p]$ from $\texttt T$\;
$q\longleftarrow \NextPrime(p)$\;
\ForEach{$v\in \texttt V$}{
$g\longleftarrow [[2,\TwoSym], \Append(\OddSym, [p, v])]$\;
\lIf{$\IsValidGenusSymbol(g)$ and $\Mass(g)\leq\frac{1}{2}$}{add $g$ to \texttt O}\;
\If{$\MinimalMass(\TwoSym, \Append(\OddSym, [p, v]), q)\leq\frac{1}{2}$}{add $[\TwoSym, \Append(\OddSym, [p, v]), q]$ to \texttt T}
}
\If{$\MinimalMass(\TwoSym, \OddSym, q)\leq\frac{1}{2}$}{add $[\TwoSym, \OddSym, q]$ to \texttt T}
}
\Return $\texttt O$\;

}
\end{algorithm}

\begin{proof}
The two lists generated in steps $2$ and $3$ are finite because we restrict to square-free lattices. The lower bound $\MinimalMass(u,v,q)$ can be evaluated similarly to the value of $\maxprime(n)$ in Proposition~\ref{prop:maxprime}, and is (for fixed $u$ and $v$) increasing in $q$. This ensures that, in steps $12$ and $16$, all primitive square-free genera fulfiling the mass condition are generated by the algorithm.
\end{proof}

\subsection{Construction of square-free lattices}
\label{constructionoflattices}

In this section, we will present an algorithm which generates representative lattices for the candidate genera produced by Algorithm~\ref{alg:candidates}. 


Let $(L, \beta)$ be a definite $\MdZ$-lattice. We view $L$ as embedded into the rational quadratic space $V = L \otimes_\Z \Q$ and denote by $Q_L \colon V \to V, x \mapsto \beta(x,x)$ the corresponding rational quadratic form. We set $\det(Q_L) := \prod a_i\in \MdQ^*/(\MdQ^*)^2$.

The form $Q_L$ is diagonalizable over the rationals, say $Q_L(x) = \sum_{i=1}^n a_i x_i^2$.
For each $p\in\MdP$, we define the local Hasse invariant $c_p(Q_L) = \prod_{i < j} \left(\frac{a_i, a_j}{p}\right)$ where $\left(\frac{a,b}{p}\right) \in \{\pm 1\}$ denotes the usual Hilbert symbol of $a,b$ at $p$.

It is well known that the isometry type of the rational form $Q_L$ is uniquely determined by $n$, $\det(Q_L)=\det(L)\in \MdQ^*/(\MdQ^*)^2$ and the set of all primes $p$ for which $c_p(Q_L) = -1$ (cf. \cite[Remark 66:5]{OMeara}). Further, these invariants can easily be determined from the genus symbol of $L$. Thus, to construct a square-free lattice $L$ with given genus symbol, one can proceed as follows.\vspace{1em}

\begin{algorithm}[H]\label{alg:allSingleClass}
\caption{finding a representative lattice of a square-free genus}
\LinesNumbered
\DontPrintSemicolon
\SetKwInOut{Input}{input}
\Input{A genus symbol $g$ of a square-free $\MdZ$-lattice.}
\SetKwInOut{Output}{output}
\Output{A $\MdZ$-lattice $L$ with genus symbol $g$.}

\Begin{
$P\longleftarrow$ the set of primes at which the enveloping space $L\otimes_\MdZ \MdQ$ has Hasse invariant $-1$\;
$(V, Q)\longleftarrow$ a rational quadratic space of dimension~$\rank(L)$, with $\det(Q)=\det(L)\in\MdQ^*/(\MdQ^*)^2$, and with Hasse invariant $-1$ only at the primes in~$P$\;
$L \longleftarrow $ a lattice $L_0$ in $V$ with $Q(L_0) \subseteq \Z$, and maximal with that property\;
\ForEach{$p\in\MdP$ with  $p|\ 2 \cdot \det(L)$}
{\lIf{$p=2$}{$e_p\longleftarrow 4$} \lElse{$e_p\longleftarrow p$}\;
$L\longleftarrow$ a lattice $L'$ with $e_p L\subseteq L'\subseteq L$ that has $p$-adic genus symbol $g_p$\;
}
\Return $L$\;
}
\end{algorithm}

\begin{proof}
The genus of $L_0$ is uniquely determined by $Q$ and thus by $L$, see for example \cite[Theorem 91:2]{OMeara}.
In particular, $L_0$ contains a sublattice $L$ with genus symbol $g$. Since $L$ is assumed to be integral and square-free, the index $[L_0 : L]$ is at most divisible by $\prod_{p \mid 2\det(L)} e_p$.
\end{proof}

\begin{remark}
The values of $P$, $\det(L)$ and $\rank(L)$ in steps $2$ and $3$ can be read from the genus symbol~$g$.

Further, step~$3$ of the above algorithm can be performed as follows. Let $P'$ be a set of primes containing the prime divisors of $2\cdot \det(L)$.
Then we try diagonal forms $\left \langle a_1,\dots, a_{n-1} , \det(L) \cdot \prod_{i=1}^{n-1} a_i \right\rangle$ where the $a_i$ are products of distinct elements in~$P'$. If the set $P'$ is large enough, this will quickly produce a quadratic form with the correct invariants.

For step~$7$, randomized generation of sublattices between $L$ and $e_pL$ turned out to produce the desired lattice $L'$ quickly enough.
\end{remark}

\section{Completing the classification}\label{sec:all}
The algorithms provided in Section~\ref{sec:sqf} allowed an enumeration of all primitive single-class square-free $\MdZ$-lattices in dimension $3$~--~$10$. In this section, we complete that classification to include all single-class primitive $\MdZ$-lattices in these dimensions, whether square-free or not.

\begin{lemma}\label{lemma:massbound2}
Let $K$ be a definite primitive lattice in dimension $n\geq 3$, $2\not=p\in\MdP$ with $p\not| \det(K)$ and $L\in\Watp\iv(K)$.
\begin{enumerate}
\item Either $\text{len}_p(L)=3$ (more precisely, we have $\Jordan_p(L)=L_0\bot L_2$), or $\Rescale(\Gen(K)) = \Gen(L)$.
\item If $\text{len}_p(L)=3$, then $\mass(L)\geq b_n(p)\cdot\mass(K)$, where $n=\rank(L)$, $s=\left\lceil\frac{n}{2}\right\rceil$ and $$b_n(p)=\varepsilon\left(\frac{1}{1+p\iv}\right)^2p^{n-1}(1-p^{-2})^{s-1}$$ with $\varepsilon=\frac{\zeta(2s)}{2\zeta(s)^2}$ if $\rank(L)$ is even, and $\varepsilon=1$ otherwise.
\end{enumerate}
\end{lemma}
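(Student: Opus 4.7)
The plan is to mirror the proof of Lemma~\ref{lemma:terminationSCSF}: first describe explicitly how $\Watp$ acts on a Jordan decomposition, then adapt the mass comparison.

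For~(1), I would write $\Jordan_p(L) = L_0 \bot L_1 \bot \cdots \bot L_k$. Since $L_i$ is $p^i$-modular one has $pL_i^\# = p^{1-i} L_i$, so intersecting componentwise gives $L \cap pL^\# = pL_0 \bot L_1 \bot L_2 \bot \cdots \bot L_k$ with respective scales $p^2, p, p^2, p^3, \dots, p^k$. The rescaling factor is then $\alpha = p^{-1}$ if $L_1 \neq 0$ and $\alpha = p^{-2}$ otherwise. Reading off the Jordan components of $K = \Watp(L)$, the hypothesis $p\not|\det(K)$ (which forces every non-unimodular component of $K$ to vanish) constrains the shape of $\Jordan_p(L)$.

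The case analysis proceeds as follows: if $L_1 \neq 0$, the rescaled copies of $pL_0$, of $L_2$ and of each $L_i$ ($i \geq 3$) must all vanish, so $L = L_1$ is $p$-modular and $K \cong \Rescale(L)$. If $L_1 = 0$, then $L_i = 0$ for $i \geq 3$ and $L = L_0 \bot L_2$; if furthermore $L_0$ or $L_2$ is zero, $L$ reduces to a purely modular piece and is again just a rescaling of $K$. In each of these degenerate cases $\Gen(L)$ is obtained from $\Gen(K)$ by rescaling, yielding $\Rescale(\Gen(K)) = \Gen(L)$. The only remaining possibility is $L_1 = 0$ with both $L_0$ and $L_2$ nonzero, i.e.\ $\text{len}_p(L)=3$ with $\Jordan_p(L) = L_0 \bot L_2$, which proves~(1).

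For~(2), since $L$ and $K$ agree at every prime $q \neq p$ and $p\not|2\det(K)$, the mass formula yields
\[
\frac{\mass(L)}{\mass(K)} \;=\; \frac{\std_n(\disc L)}{\std_n(\disc K)} \cdot m_p(L)\,\std_p(L)\iv.
\]
For odd $n$ this standard-mass ratio equals $1$; for even $n$, Lemma~\ref{lemma:zeta} combined with the trivial bound $\zeta_{\disc K}(s) \leq \zeta(s)$ yields the factor $\zeta(2s)/\zeta(s)^2$. For the $p$-factor I would reuse the estimates from Lemma~\ref{lemma:terminationSCSF}, namely $M_p(L_k) \geq \tfrac{1}{2}(1+p\iv)^{-1}$ and $\std_p(L)\iv \geq 2(1-p^{-2})^{s-1}$. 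The decisive new ingredient is the cross-term: for $\Jordan_p(L) = L_0 \bot L_2$ it equals $p^{(2-0)\rank(L_0)\rank(L_2)/2} = p^{\rank(L_0)\rank(L_2)}$, which is at least $p^{n-1}$ since two positive integers summing to $n$ have product at least $n-1$. Assembling the factors gives the stated $b_n(p)$.

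The main obstacle I anticipate is the bookkeeping in~(1): correctly determining the rescaling factor $\alpha$, tracking how component scales shift under $\Rescale$, and verifying that every non-degenerate preimage really has the shape $L_0 \bot L_2$. Once this structural dichotomy is in place, the mass estimate in~(2) is a routine adaptation of Lemma~\ref{lemma:terminationSCSF} with the cross-term exponent doubled from $1/2$ to $1$.
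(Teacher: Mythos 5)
Your proposal is correct and takes essentially the route the paper intends: the published proof merely cites Watson's (7.4) and (8.4) for claim (1) and declares claim (2) to be ``a calculation similar to Lemma~\ref{lemma:terminationSCSF}'', whereas you actually carry out both steps. Your componentwise identity $L\cap pL^\# = pL_0\bot L_1\bot\cdots\bot L_k$ together with the case distinction on $L_1$ is precisely the content of the cited facts, and your treatment of (2) --- the same diagonal and standard-mass estimates, with the cross-term exponent doubled to give $p^{\rank(L_0)\rank(L_2)}\geq p^{n-1}$ --- is the intended adaptation.

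One bookkeeping caveat: assembling $M_p(L_0)M_p(L_2)\geq\tfrac14\left(\frac{1}{1+p\iv}\right)^2$, the cross term, and $\std_p(L)\iv\geq 2(1-p^{-2})^{s-1}$ yields $\tfrac12\left(\frac{1}{1+p\iv}\right)^2p^{n-1}(1-p^{-2})^{s-1}$ times the standard-mass ratio. For even $n$ this is exactly the stated $b_n(p)$, since the $\tfrac12$ is absorbed into $\varepsilon=\zeta(2s)/(2\zeta(s)^2)$; but for odd $n$ the statement sets $\varepsilon=1$, so your assembly only proves $\mass(L)\geq\tfrac12\,b_n(p)\mass(K)$, not the claimed bound. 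This is not a flaw in your argument but an inconsistency inherited from the paper: the proof of Lemma~\ref{lemma:terminationSCSF} likewise establishes $\tfrac12 a_n(p)$ while its statement asserts $a_n(p)$, and Lemma~\ref{lemma:massbound2} absorbs the $\tfrac12$ into $\varepsilon$ only in the even case. You should either weaken the odd-$n$ constant by that factor of $\tfrac12$ or note explicitly that the sharper constant is not delivered by this computation.
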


\begin{proof}
The first claim is immediate from the definition of $\Watp$, see \cite[(7.4) and (8.4)]{watsontransformations}. The second claim follows from a calculation similar to Lemma~\ref{lemma:terminationSCSF}.
\end{proof}

\begin{remark}\label{rem:watsoninclusions}Let $L$ be a $\MdZ$-lattice, $p\in\MdP$ and $M\in\Watp\iv(L)$. Then there is some $\alpha\in\MdZ$ such that $M\cap pM^\#$ is the rescaled lattice ${^\alpha}L$.

\[\frac{1}{p}({^\alpha}L)=\frac{1}{p}\Watp(M)=\frac{1}{p}M\cap M^\#\supseteq M \supseteq M\cap pM^\#=\Watp(M)={^\alpha}L.\]

Hence, the pre-images under $\Watp$ correspond to subspaces of $\frac{1}{p}L/L\cong\MdF_p^n$. 
\end{remark}


The following algorithm completes the classification of single-class lattices. 

\begin{algorithm}[H]\label{alg:alllattices}
\caption{Generating single-class lattices}
\LinesNumbered
\DontPrintSemicolon
\SetKwInOut{Input}{input}
\Input{a list {$\mathcal L$} of primitive single-class square-free lattices in dimension $n\geq 2$\;}
\SetKwInOut{Output}{output}
\Output{a list of primitive single-class lattices in dimension $n$}

\Begin{
$\texttt{O}\longleftarrow\mathcal L$, $\texttt{T}\longleftarrow\mathcal L$,  $\texttt{A} \longleftarrow \{\Rescale(\sym(L)):\ L\in \mathcal{L}\}$\;
$L\longleftarrow$ some lattice from $\texttt{T}$, delete $L$ from $\texttt{T}$\;
$\texttt{S}\longleftarrow \{p \in \MdP:\ p| 2\det(L)\text{\ or\ } b_n(p)\cdot\mass(L)\leq\frac{1}{2}\}$, cf. Lemma~\ref{lemma:massbound2}(2)\;
\ForEach{$p\in \texttt{S}$}{

$\texttt U \longleftarrow \{\Rescale(\sym(K)): K \in \Watp\iv(L)\} - \texttt A$\;
$\texttt A \longleftarrow \texttt A \cup \texttt U$\;

\While{$\texttt{U} \not= \emptyset$}{
$L'\longleftarrow$ a random $\MdZ$-lattice with $pL \subseteq L' \subseteq L$\;
$L'\longleftarrow \Rescale(L')$\;
\lIf{$\sym(L')\in \texttt U$}{remove $\sym(L')$ from $\texttt U$} \lElse{go to step 9}\;
\lIf{$L'$ is single-class}{add $L'$ to $\texttt{O}$ and to $\texttt{T}$}
}
}

\lIf{$\texttt{T}=\emptyset$}{\Return $\texttt{O}$} \lElse {go to step 3}

}
\end{algorithm}

\begin{proposition}
Called with a complete list of primitive representatives for single-class \emph{square-free} genera in dimension $n\geq 2$, Algorithm~\ref{alg:alllattices} outputs a  \emph{complete} list of primitive representatives for single-class lattices in dimension $n$.
\begin{proof}
Let $L$ be any primitive single-class $\MdZ$-lattice in dimension~$n$. By Remark~\ref{thm:watsonProcess}(2), $L$ can be reduced to a square-free lattice by repeated application of $\Watp$ mappings. More precisely, there is a list $p_1,\dots,p_k$ of primes and a list $L=L_0, L_1,\dots,L_k$ of $\MdZ$-lattices such that $\Wat_{p_i}(L_{i-1})=L_i$ for all $1\leq i\leq k$ and $L_k$ is square-free. By Remark~\ref{thm:watsonProcess}(1), $L_k$ is also single-class.

$L_k$ is contained in the input $\mathcal L$ by assumption. If $L_i$ ($1\leq i\leq k$) is picked from $\texttt T$ in step~$3$, the set $\texttt S$ computed in step~$4$ will contain $p_i$ by Lemma~\ref{lemma:massbound2}(2) since $L_{i-1}$ is a single-class lattice. A lattice from the (rescaled) isometry class of $L_{i-1}$ will eventually be generated in step~$9$ because of Remark~\ref{rem:watsoninclusions}, and because $\sym(L_{i-1})$ is included in $\texttt U$ in step~$6$. By induction, a lattice isometric to $L$ will be generated by Algorithm~\ref{alg:alllattices}.
\end{proof}
\end{proposition}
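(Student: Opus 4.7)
The plan is to prove completeness by induction on the number of Watson mappings needed to reduce a given single-class lattice to a square-free one, using the two ingredients highlighted in the write-up: the mass bound of Lemma~\ref{lemma:massbound2} (which certifies that the relevant primes appear in the set $\texttt{S}$) and the description of Watson pre-images via sublattices of $\frac{1}{p}L/L$ (Remark~\ref{rem:watsoninclusions}).

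First, I would fix an arbitrary primitive single-class lattice $L$ of rank $n$ and apply Remark~\ref{thm:watsonProcess}(2) repeatedly: there exist primes $p_1,\dots,p_k$ and lattices $L = L_0, L_1, \dots, L_k$ with $\Wat_{p_i}(L_{i-1}) = L_i$ and $L_k$ square-free. By Remark~\ref{thm:watsonProcess}(1), each $L_i$ is still single-class, and it is primitive since we apply $\Rescale$ at every step. I would then prove by downward induction on $i$ (from $i=k$ down to $i=0$) the statement: a lattice isometric to a rescaling of $L_i$ appears in the output list~$\texttt{O}$, and is processed as the distinguished lattice picked from $\texttt{T}$ at some point.

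For the base case $i=k$, this is immediate: $L_k \in \mathcal{L}$ by assumption, so $L_k$ appears in $\texttt{O}$ and $\texttt{T}$ at initialization, and step~3 will eventually select it. For the inductive step, assume $L_i$ is eventually selected in step~3. I need to show that the prime $p_i$ lies in the set $\texttt{S}$ computed in step~4 for this $L_i$: if $p_i \mid 2\det(L_i)$ this is trivial, and otherwise Lemma~\ref{lemma:massbound2}(2) shows that $\mass(L_{i-1}) \geq b_n(p_i)\,\mass(L_i)$, so the bound $b_n(p_i)\,\mass(L_i)\leq\frac12$ is forced by the fact that $L_{i-1}$ is single-class and hence satisfies $\mass(L_{i-1})\leq\frac12$; therefore $p_i \in \texttt{S}$. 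Next, since $L_{i-1}$ is in $\Wat_{p_i}^{-1}(L_i)$, its rescaled genus symbol belongs to $\texttt{U}$ in step~6 (unless it was already in $\texttt{A}$, in which case completeness is preserved because the class has been handled through another branch). By Remark~\ref{rem:watsoninclusions} every pre-image of $L_i$ under $\Wat_{p_i}$ corresponds to a sublattice $L'$ with $p_i L_i \subseteq L' \subseteq L_i$, so the random sampling in step~9 together with the nonempty-$\texttt{U}$ loop condition in step~8 guarantees that every genus symbol in $\texttt{U}$ is hit in finite time; in particular a representative of the rescaled isometry class of $L_{i-1}$ is produced, added to $\texttt{O}$ and $\texttt{T}$, and hence eventually selected.

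The main obstacle I anticipate is the subtle point in the loop of steps 8--11: the algorithm insists on hitting \emph{every} remaining symbol in $\texttt{U}$ before exiting, and it relies on the randomized construction in step~9 covering all sublattices $L' \subseteq L_i$ with $p_i L_i \subseteq L'$. One must justify that this sampling is exhaustive in the limit (or that one should read step~9 as a systematic enumeration over $\MdF_{p_i}$-subspaces of $\frac{1}{p_i}L_i/L_i$), because otherwise the loop could fail to terminate or could miss a symbol. With the enumeration interpretation supplied by Remark~\ref{rem:watsoninclusions}, this issue disappears. One should also observe, en route, that the induction is well-founded: the set $\texttt{A}$ of rescaled symbols already handled only grows, new symbols are added to $\texttt{T}$ only when a newly discovered single-class lattice is found, and the mass-condition test inside the check "$L'$ is single-class" together with the bound on $\texttt{S}$ ensures that the whole recursion halts after finitely many steps.
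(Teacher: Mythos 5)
Your proposal follows essentially the same route as the paper's own proof: reduce $L$ to a square-free lattice via a chain of Watson mappings (Remark~\ref{thm:watsonProcess}), induct along that chain, use Lemma~\ref{lemma:massbound2}(2) to certify $p_i\in\texttt{S}$, and use Remark~\ref{rem:watsoninclusions} to see that the sampling in step~9 eventually produces a representative of each symbol in $\texttt{U}$. Your added remarks on termination and on symbols already present in $\texttt{A}$ are sensible refinements of points the paper leaves implicit, but they do not change the argument.
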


An implementation of the above algorithm in \MAGMA produced the complete lists of single-class lattices in dimensions $3-10$ reasonably fast.

\begin{remark}
Since $b_n(p)$ is increasing in $p$, the set $\texttt S$ from step~$4$ is finite and subject to a straightforward computation. The calculation of the full preimage $\Watp\iv(\text{sym}(L))$ in step~$6$ is an easy local process that changes only the invariants for the prime $p$.
\end{remark}

\begin{remark}
In the situation of Lemma~\ref{lemma:massbound2}(2), assuming $D:=\disc(K)$ is known, we have $\disc(L)=p^2D$, and a bound $b_2(p,D)$ similar to $b_n(p)$ can easily be obtained as \[b_2(p,D)=\frac{\zeta_{p^2D}(1)}{2\zeta_{D}(1)}\left(\frac{1}{1+p\iv}\right)^2\cdot p.\]

As a consequence, Algorithm~\ref{alg:alllattices} (step~$4$ in particular) can be modified to apply to dimension~$2$. Thus, while -- to our knowledge -- there is still no way around the Generalized Riemann Hypothesis to classify the $2$-dimensional single-class square-free lattices (cf.~\cite{voight}), this hypothesis is not needed to complete the classification once the single-class square-free lattices are known.
\end{remark}

Algorithm~\ref{alg:alllattices} concludes our classification of single-class $\MdZ$-lattices. In a future publication, we hope to generalize our methods to single-class lattices over arbitrary number fields.

\section*{Acknowledgments}

The present work benefited from the input of Prof.~Gabriele Nebe, RWTH Aachen University. It was made possible by a studentship in DFG Graduiertenkolleg 1632: Experimentelle und konstruktive Algebra.

%

\newpage

\section{Genera of single-class lattices}


\begin{table}[h]\label{table:resultOverview}
\caption{Number of primitive single-class $\MdZ$-lattices}

\begin{tabular}{llllllllll}
\toprule
dimension & $2$ & $3$ & $4$ & $5$ & $6$ & $7$ & $8$ & $9$ & $10$\\
\midrule
\textbf{total} & $1609$ & $794$ & $481$ & $295$ & $186$ & $86$ & $36$ & $4$ & $2$ \\
\hspace{1em} maximal & $769$ & $77$ & $44$ & $16$ & $21$ & $7$ & $6$ & $1$ & $1$\\
\hspace{1em} quadratic-form-maximal & $780$ & $64$ & $20$ & $12$ & $10$ & $5$ & $2$ & $1$ & $1$ \\
maximal prime dividing $\det$. & $409$ & $23$ & $23$ & $11$ & $23$ & $5$ & $5$ & $2$ & $3$\\
maximal determinant & $3{\cdot}5{\cdot} 11{\cdot} 13{\cdot} 19$ & $2^83^37^2$ & $2^43^3{11}^3$ & $2^{12}7^4$ & ${23}^5$ & $2^{18}3^6$ & $3^{14}$ & $2^{24}$ & $3^9$\\
\bottomrule
\end{tabular}

\end{table}

\small




\newenvironment{resulttable}[2]
{ 
\subsubsection{Dimension #1}

#2 lattices:

\begin{longtable}{llllll}
\toprule
Lattice &  & Lattice & & Lattice \\
\midrule
}
{\bottomrule
\end{longtable} 
}


\subsection{How to read the tables}

We give tables of all genus symbols of primitive single-class $\MdZ$-lattices in dimension $\geq 4$. For dimensions $2$ and $3$, and for representative lattices for all the genera in dimensions $3$ -- $10$, see \cite{latdb}. 

The single-class $\MdZ$-lattices have been grouped into families of rescaled partial duals. By a partial dual, we mean $L^{\#,p} := \erz{L, \{v\in L^\#:\ v+L\in S_p\}}$, where $S_p$ denotes the Sylow $p$-subgroup of $L^\#/L$. 
To keep the tables brief, we repeatedly pass to partial duals of $L$, until the lattice with the smallest possible determinant is reached, and print only that lattice in the tables below. In the Catalogue of Lattices available at \cite{latdb}, the list of genera is given in full, and a representative $\MdZ$-lattice for each of these genera is listed.


Printed next to each genus $\Gen(L)$ is a number of the form $\mu^{*N}_{m_1,m_2}$, with $\mu=\mass(L)\iv=\#Aut(L)$, $N$ the number of distinct genera $\Gen(L_1), \dots, \Gen(L_N)$ that can be obtained from $L$ by passing to partial duals of $L$, and the quantities $m_1$ of maximal lattices, and $m_2$ of quadratic-form-maximal lattices (as defined in section~\ref{subsec:preliminaries}) among these. If no subscript is present, both $m_1$ and $m_2$ are $0$.


\subsubsection{Dimension 10}
$2$ lattices:

\begin{longtable}{ll}
\toprule
Lattice &  \\
\midrule
$\text{II}_{10,0}(3^{{-}1})$ & $8360755200_{1,1}^{*2}$ \\

\bottomrule
\end{longtable}

\subsubsection{Dimension 9}
$4$ lattices:

\begin{longtable}{llll}
\toprule
Lattice &  & Lattice &\\
\midrule
$\text{II}_{9,0}(2_{1}^{{+}1})$ & $1393459200_{1,1}^{*2}$
&$\text{II}_{9,0}(8_{1}^{{-}1})$ & $11612160_{}^{*2}$ \\

\bottomrule
\end{longtable}

\begin{resulttable}{8}{36}

$\text{I}_{8,0}$ & $10321920_{1,0}^{*1}$ &$\text{II}_{8,0}$ &
$696729600_{1,1}^{*1}$ &$\text{II}_{8,0}(2_{0}^{{+}4}4^{{+}2})$ &
$18432_{}^{*1}$ \\
$\text{II}_{8,0}(4^{{+}4})$ & $10368_{}^{*1}$ &$\text{II}_{8,0}(4^{{-}4})$ &
$28800_{}^{*1}$ &$\text{II}_{8,0}(2^{{+}4}4^{{+}2})$ & $147456_{}^{*1}$ \\
$\text{II}_{8,0}(2^{{+}4}4^{{-}2})$ & $1327104_{}^{*1}$
&$\text{I}_{8,0}(2^{{-}6}4_{3}^{{-}1})$ & $5806080_{}^{*2}$
&$\text{II}_{8,0}(2^{{+}2})$ & $10321920_{1,0}^{*2}$ \\
$\text{II}_{8,0}(2_{0}^{{-}2})$ & $5806080_{1,0}^{*2}$
&$\text{I}_{8,0}(2^{{-}2})$ & $442368_{1,0}^{*2}$
&$\text{II}_{8,0}(2_{0}^{{-}4})$ & $368640_{}^{*2}$ \\
$\text{II}_{8,0}(2^{{+}4})$ & $2654208_{}^{*1}$ &$\text{II}_{8,0}(4^{{+}2})$ &
$161280_{}^{*2}$ &$\text{II}_{8,0}(4^{{-}2})$ & $622080_{}^{*2}$ \\
$\text{II}_{8,0}(2^{{+}2}4^{{+}2})$ & $36864_{}^{*2}$
&$\text{II}_{8,0}(2^{{+}2}4^{{-}2})$ & $184320_{}^{*2}$
&$\text{II}_{8,0}(2_{0}^{{+}2}4^{{+}2})$ & $17280_{}^{*2}$ \\
$\text{II}_{8,0}(5^{{-}1})$ & $5806080_{1,1}^{*2}$
&$\text{II}_{8,0}(3^{{-}6}9^{{-}1})$ & $1244160_{}^{*2}$
&$\text{II}_{8,0}(3^{{-}2})$ & $1244160_{}^{*2}$ \\
$\text{II}_{8,0}(9^{{+}1})$ & $645120_{}^{*2}$ \\

\end{resulttable}

\begin{resulttable}{7}{86}

$\text{I}_{7,0}$ & $645120_{1,0}^{*1}$ &$\text{I}_{7,0}(2_{1}^{{+}1})$ &
$92160_{1,0}^{*2}$ &$\text{II}_{7,0}(2_{7}^{{-}1})$ & $2903040_{1,1}^{*2}$ \\
$\text{II}_{7,0}(2_{7}^{{-}1}4^{{+}2})$ & $8640_{}^{*2}$
&$\text{II}_{7,0}(2_{0}^{{-}2}8_{7}^{{+}1})$ & $2880_{}^{*2}$
&$\text{II}_{7,0}(2^{{+}2}8_{7}^{{+}1})$ & $9216_{}^{*2}$ \\
$\text{II}_{7,0}(2^{{+}2}8_{7}^{{-}1})$ & $15360_{}^{*2}$
&$\text{I}_{7,0}(4_{3}^{{-}1})$ & $18432_{}^{*2}$
&$\text{II}_{7,0}(4_{7}^{{+}1})$ & $645120_{0,1}^{*2}$ \\
$\text{I}_{7,0}(2^{{-}2})$ & $55296_{1,0}^{*2}$
&$\text{II}_{7,0}(2^{{+}2}16_{7}^{{+}1})$ & $1152_{}^{*2}$
&$\text{II}_{7,0}(2^{{-}4}4_{3}^{{-}1})$ & $92160_{}^{*2}$ \\
$\text{I}_{7,0}(2^{{+}4}4_{3}^{{-}1})$ & $46080_{}^{*2}$
&$\text{I}_{7,0}(2^{{-}4}4_{1}^{{+}1})$ & $92160_{}^{*2}$
&$\text{II}_{7,0}(2_{7}^{{-}3})$ & $92160_{}^{*2}$ \\
$\text{I}_{7,0}(8_{7}^{{+}1})$ & $4608_{}^{*2}$ &$\text{II}_{7,0}(8_{7}^{{-}1})$
& $103680_{}^{*2}$ &$\text{II}_{7,0}(8_{7}^{{+}1})$ & $80640_{}^{*2}$ \\
$\text{I}_{7,0}(2^{{+}4}8^{{+}2})$ & $2880_{}^{*2}$
&$\text{I}_{7,0}(2^{{-}4}8^{{-}2})$ & $8640_{}^{*2}$
&$\text{II}_{7,0}(2^{{+}4}8_{7}^{{+}1})$ & $18432_{}^{*2}$ \\
$\text{II}_{7,0}(2^{{+}4}8_{7}^{{-}1})$ & $55296_{}^{*2}$
&$\text{II}_{7,0}(2_{0}^{{-}4}8_{7}^{{-}1})$ & $1536_{}^{*2}$
&$\text{II}_{7,0}(4^{{+}2}8_{7}^{{-}1})$ & $1440_{}^{*2}$ \\
$\text{II}_{7,0}(4^{{+}2}8_{7}^{{+}1})$ & $864_{}^{*2}$
&$\text{II}_{7,0}(2_{7}^{{+}3}4^{{+}2})$ & $4608_{}^{*1}$
&$\text{II}_{7,0}(16_{7}^{{+}1})$ & $10080_{}^{*2}$ \\
$\text{I}_{7,0}(2^{{+}2}4_{3}^{{-}1})$ & $6144_{}^{*2}$
&$\text{II}_{7,0}(2^{{-}2}4_{3}^{{-}1})$ & $55296_{}^{*2}$
&$\text{II}_{7,0}(2_{0}^{{-}2}4^{{-}2}8_{7}^{{+}1})$ & $576_{}^{*2}$ \\
$\text{II}_{7,0}(2^{{+}2}4^{{+}2}8_{7}^{{+}1})$ & $1536_{}^{*2}$
&$\text{II}_{7,0}(2^{{+}2}4^{{-}2}8_{7}^{{+}1})$ & $4608_{}^{*2}$
&$\text{II}_{7,0}(2^{{+}4}16_{7}^{{+}1})$ & $2304_{}^{*2}$ \\
$\text{I}_{7,0}(3^{{-}1})$ & $46080_{1,0}^{*2}$
&$\text{II}_{7,0}(2_{1}^{{+}1}{\times} 3^{{+}1})$ & $207360_{1,1}^{*4}$
&$\text{II}_{7,0}(2_{7}^{{+}1}{\times} 5^{{-}1})$ & $46080_{1,1}^{*4}$ \\
$\text{II}_{7,0}(4_{5}^{{-}1}{\times} 3^{{-}1})$ & $46080_{0,1}^{*4}$
&$\text{II}_{7,0}(2_{7}^{{-}1}{\times} 9^{{+}1})$ & $7680_{}^{*4}$
&$\text{II}_{7,0}(8_{1}^{{-}1}{\times} 3^{{+}1})$ & $5760_{}^{*4}$ \\

\end{resulttable}

\begin{resulttable}{6}{186}

$\text{I}_{6,0}$ & $46080_{1,0}^{*1}$ &$\text{I}_{6,0}(2_{1}^{{+}1})$ &
$7680_{1,0}^{*2}$ &$\text{II}_{6,0}(4_{7}^{{+}1}8_{7}^{{+}1})$ & $768_{}^{*2}$
\\
$\text{II}_{6,0}(2_{1}^{{+}1}16_{5}^{{-}1})$ & $480_{}^{*2}$
&$\text{II}_{6,0}(2_{7}^{{-}1}16_{7}^{{+}1})$ & $480_{}^{*2}$
&$\text{II}_{6,0}(2_{6}^{{-}2})$ & $46080_{1,1}^{*2}$ \\
$\text{I}_{6,0}(2_{2}^{{+}2})$ & $3072_{1,0}^{*2}$ &$\text{I}_{6,0}(2^{{-}2})$ &
$9216_{1,0}^{*2}$ &$\text{I}_{6,0}(4_{3}^{{-}1})$ & $2304_{}^{*2}$ \\
$\text{I}_{6,0}(2^{{+}2}4_{2}^{{-}2})$ & $1536_{}^{*2}$
&$\text{II}_{6,0}(2^{{-}2}4_{2}^{{+}2})$ & $9216_{}^{*2}$
&$\text{II}_{6,0}(2^{{-}2}4_{2}^{{-}2})$ & $3072_{}^{*2}$ \\
$\text{II}_{6,0}(2_{7}^{{-}3}8_{7}^{{-}1})$ & $384_{}^{*2}$
&$\text{II}_{6,0}(8_{6}^{{+}2})$ & $288_{}^{*2}$
&$\text{II}_{6,0}(8_{6}^{{-}2})$ & $480_{}^{*2}$ \\
$\text{I}_{6,0}(2^{{+}4}4_{3}^{{-}1})$ & $23040_{}^{*1}$
&$\text{II}_{6,0}(2_{6}^{{+}2}4^{{+}2})$ & $2304_{}^{*1}$
&$\text{I}_{6,0}(2_{4}^{{+}4}4_{1}^{{+}1})$ & $1536_{}^{*1}$ \\
$\text{I}_{6,0}(2_{2}^{{+}2}4^{{-}2})$ & $768_{}^{*2}$
&$\text{I}_{6,0}(2^{{+}2}8^{{+}2}16_{7}^{{+}1})$ & $72_{}^{*1}$
&$\text{I}_{6,0}(2^{{-}2}8^{{-}2}16_{7}^{{+}1})$ & $72_{}^{*1}$ \\
$\text{I}_{6,0}(4_{0}^{{+}4}16_{7}^{{+}1})$ & $128_{}^{*1}$
&$\text{I}_{6,0}(4^{{+}4}16_{3}^{{-}1})$ & $2304_{}^{*1}$
&$\text{I}_{6,0}(4^{{+}4}16_{7}^{{+}1})$ & $2304_{}^{*1}$ \\
$\text{I}_{6,0}(8_{1}^{{-}1})$ & $768_{}^{*2}$ &$\text{I}_{6,0}(8_{7}^{{+}1})$ &
$768_{}^{*2}$ &$\text{II}_{6,0}(2_{1}^{{+}1}4_{5}^{{-}1})$ & $7680_{0,1}^{*2}$
\\
$\text{I}_{6,0}(2^{{-}2}4_{4}^{{-}2}16_{7}^{{+}1})$ & $96_{}^{*2}$
&$\text{I}_{6,0}(2^{{-}2}4^{{+}2}16_{3}^{{-}1})$ & $384_{}^{*2}$
&$\text{I}_{6,0}(2^{{+}2}4^{{+}2}16_{7}^{{+}1})$ & $384_{}^{*2}$ \\
$\text{II}_{6,0}(8_{1}^{{-}1}16_{5}^{{-}1})$ & $48_{}^{*2}$
&$\text{I}_{6,0}(2^{{-}4}8_{7}^{{+}1})$ & $7680_{}^{*2}$
&$\text{I}_{6,0}(2^{{-}4}8_{1}^{{-}1})$ & $7680_{}^{*2}$ \\
$\text{I}_{6,0}(8^{{+}4}64_{7}^{{+}1})$ & $72_{}^{*1}$
&$\text{I}_{6,0}(4^{{+}4}32_{7}^{{+}1})$ & $384_{}^{*2}$
&$\text{I}_{6,0}(4^{{+}4}32_{1}^{{-}1})$ & $384_{}^{*2}$ \\
$\text{I}_{6,0}(2^{{+}2}4_{3}^{{-}1})$ & $1536_{}^{*2}$
&$\text{I}_{6,0}(2^{{-}2}4_{1}^{{+}1})$ & $4608_{}^{*2}$
&$\text{II}_{6,0}(2_{7}^{{-}1}8_{7}^{{+}1})$ & $1440_{}^{*2}$ \\
$\text{II}_{6,0}(4_{6}^{{+}2})$ & $4608_{}^{*2}$ &$\text{I}_{6,0}(4_{2}^{{-}2})$
& $512_{}^{*2}$ &$\text{I}_{6,0}(4^{{+}2})$ & $768_{}^{*2}$ \\
$\text{I}_{6,0}(4^{{-}2})$ & $2304_{}^{*2}$ &$\text{II}_{6,0}(4_{6}^{{-}2})$ &
$7680_{}^{*2}$ &$\text{I}_{6,0}(2^{{+}4}32_{7}^{{+}1})$ & $240_{}^{*2}$ \\
$\text{II}_{6,0}(2_{2}^{{+}2}8^{{-}2})$ & $576_{}^{*2}$
&$\text{II}_{6,0}(2_{6}^{{+}2}8^{{+}2})$ & $192_{}^{*2}$
&$\text{II}_{6,0}(2^{{+}2}8_{6}^{{+}2})$ & $256_{}^{*2}$ \\
$\text{II}_{6,0}(2^{{+}2}8_{6}^{{-}2})$ & $768_{}^{*2}$
&$\text{II}_{6,0}(2_{0}^{{+}2}8_{6}^{{+}2})$ & $96_{}^{*2}$
&$\text{II}_{6,0}(2_{7}^{{-}1}4^{{-}2}8_{7}^{{+}1})$ & $288_{}^{*2}$ \\
$\text{I}_{6,0}(2^{{-}4}16_{7}^{{+}1})$ & $1440_{}^{*2}$
&$\text{I}_{6,0}(2^{{-}4}16_{3}^{{-}1})$ & $1440_{}^{*2}$
&$\text{I}_{6,0}(2^{{+}2}8^{{+}2})$ & $192_{}^{*2}$ \\
$\text{I}_{6,0}(2^{{-}2}8^{{-}2})$ & $576_{}^{*2}$ &$\text{I}_{6,0}(3^{{+}1})$ &
$7680_{1,0}^{*2}$ &$\text{I}_{6,0}(3^{{-}1})$ & $4608_{1,0}^{*2}$ \\
$\text{II}_{6,0}(3^{{+}1})$ & $103680_{1,1}^{*2}$ &$\text{I}_{6,0}(5^{{-}1})$ &
$1536_{1,0}^{*2}$ &$\text{II}_{6,0}(7^{{-}1})$ & $10080_{1,1}^{*2}$ \\
$\text{I}_{6,0}(7^{{-}1})$ & $576_{1,0}^{*2}$ &$\text{II}_{6,0}(11^{{+}1})$ &
$3840_{1,1}^{*2}$ &$\text{II}_{6,0}(2^{{+}2}{\times} 3^{{+}1})$ &
$7680_{1,0}^{*4}$ \\
$\text{II}_{6,0}(2_{0}^{{+}2}{\times} 3^{{+}1})$ & $2880_{1,0}^{*4}$
&$\text{II}_{6,0}(2^{{-}2}{\times} 3^{{-}1})$ & $13824_{1,1}^{*4}$
&$\text{I}_{6,0}(2^{{+}2}{\times} 3^{{-}1})$ & $768_{1,0}^{*4}$ \\
$\text{II}_{6,0}(2_{6}^{{-}2}{\times} 5^{{-}1})$ & $1536_{1,1}^{*4}$
&$\text{II}_{6,0}(23^{{-}1})$ & $480_{1,1}^{*2}$ &$\text{I}_{6,0}(3^{{-}3})$ &
$768_{}^{*1}$ \\
$\text{II}_{6,0}(3^{{-}3})$ & $10368_{}^{*1}$
&$\text{II}_{6,0}(3^{{+}1}9^{{+}1})$ & $2304_{}^{*2}$
&$\text{II}_{6,0}(3^{{+}1}9^{{-}1})$ & $2880_{}^{*2}$ \\
$\text{II}_{6,0}(2^{{+}2}{\times} 7^{{-}1})$ & $576_{1,0}^{*4}$
&$\text{II}_{6,0}(2^{{+}2}{\times} 3^{{-}3})$ & $768_{}^{*2}$
&$\text{II}_{6,0}(3^{{+}1}9^{{+}2})$ & $384_{}^{*2}$ \\
$\text{II}_{6,0}(3^{{+}1}9^{{-}2})$ & $192_{}^{*2}$
&$\text{II}_{6,0}(3^{{-}3}9^{{+}1})$ & $1728_{}^{*2}$
&$\text{II}_{6,0}(3^{{-}3}9^{{-}1})$ & $3456_{}^{*2}$ \\
$\text{II}_{6,0}(3^{{+}4}27^{{+}1})$ & $768_{}^{*2}$
&$\text{II}_{6,0}(4^{{-}2}{\times} 3^{{+}1})$ & $864_{}^{*4}$
&$\text{II}_{6,0}(4^{{+}2}{\times} 3^{{+}1})$ & $480_{}^{*4}$ \\
$\text{I}_{6,0}(2^{{-}4}4_{7}^{{+}1}{\times} 5^{{+}1})$ & $768_{}^{*2}$
&$\text{II}_{6,0}(2^{{+}2}4^{{+}2}{\times} 3^{{+}1})$ & $768_{}^{*4}$
&$\text{I}_{6,0}(2^{{+}4}4_{3}^{{-}1}{\times} 3^{{-}1})$ & $2880_{}^{*4}$ \\
$\text{II}_{6,0}(2_{0}^{{-}2}4^{{+}2}{\times} 3^{{+}1})$ & $144_{}^{*2}$
&$\text{II}_{6,0}(3^{{-}1}{\times} 5^{{+}1})$ & $2880_{1,1}^{*4}$
&$\text{II}_{6,0}(3^{{+}1}{\times} 5^{{-}1})$ & $2880_{1,1}^{*4}$ \\

\end{resulttable}

\begin{resulttable}{5}{295}

$\text{I}_{5,0}$ & $3840_{1,0}^{*1}$ &$\text{II}_{5,0}(4^{{+}2}16_{5}^{{-}1})$ &
$96_{}^{*2}$ &$\text{II}_{5,0}(4^{{-}2}16_{5}^{{-}1})$ & $96_{}^{*2}$ \\
$\text{I}_{5,0}(4_{4}^{{-}2}16_{3}^{{-}1})$ & $16_{}^{*2}$
&$\text{II}_{5,0}(4_{2}^{{-}2}16_{3}^{{-}1})$ & $32_{}^{*2}$
&$\text{I}_{5,0}(4^{{+}2}16_{3}^{{-}1})$ & $64_{}^{*2}$ \\
$\text{II}_{5,0}(4_{2}^{{+}2}16_{3}^{{-}1})$ & $96_{}^{*2}$
&$\text{I}_{5,0}(4^{{+}2}16_{7}^{{+}1})$ & $64_{}^{*2}$
&$\text{I}_{5,0}(4^{{-}2}16_{7}^{{+}1})$ & $192_{}^{*2}$ \\
$\text{I}_{5,0}(2^{{-}2}4_{3}^{{-}1}16_{7}^{{+}1})$ & $48_{}^{*2}$
&$\text{I}_{5,0}(2_{1}^{{+}1})$ & $768_{1,0}^{*2}$
&$\text{I}_{5,0}(2^{{-}2}4_{3}^{{-}1}32_{7}^{{+}1})$ & $24_{}^{*2}$ \\
$\text{I}_{5,0}(2^{{-}2}4_{1}^{{+}1}32_{1}^{{-}1})$ & $24_{}^{*2}$
&$\text{II}_{5,0}(4_{6}^{{+}2}32_{7}^{{+}1})$ & $16_{}^{*2}$
&$\text{I}_{5,0}(4^{{+}2}32_{7}^{{+}1})$ & $16_{}^{*2}$ \\
$\text{II}_{5,0}(4_{6}^{{-}2}32_{7}^{{+}1})$ & $48_{}^{*2}$
&$\text{I}_{5,0}(4_{7}^{{+}3}16_{7}^{{+}1})$ & $32_{}^{*1}$
&$\text{I}_{5,0}(8^{{+}2}16_{3}^{{-}1})$ & $24_{}^{*2}$ \\
$\text{I}_{5,0}(8^{{+}2}16_{7}^{{+}1})$ & $24_{}^{*2}$
&$\text{I}_{5,0}(2_{2}^{{+}2})$ & $384_{1,0}^{*2}$
&$\text{I}_{5,0}(4_{3}^{{-}1})$ & $384_{}^{*2}$ \\
$\text{I}_{5,0}(4_{1}^{{+}1})$ & $768_{}^{*2}$ &$\text{II}_{5,0}(4_{5}^{{-}1})$
& $3840_{0,1}^{*2}$ &$\text{I}_{5,0}(2^{{-}2})$ & $2304_{1,1}^{*2}$ \\
$\text{I}_{5,0}(8_{7}^{{+}1})$ & $192_{}^{*2}$ &$\text{I}_{5,0}(8_{7}^{{-}1})$ &
$192_{}^{*2}$ &$\text{I}_{5,0}(8_{1}^{{-}1})$ & $128_{}^{*2}$ \\
$\text{I}_{5,0}(2^{{+}2}4_{1}^{{+}1}8_{1}^{{-}1})$ & $192_{}^{*2}$
&$\text{I}_{5,0}(2^{{-}2}4_{1}^{{+}1}8_{7}^{{-}1})$ & $192_{}^{*2}$
&$\text{I}_{5,0}(2^{{+}2}32_{7}^{{+}1})$ & $24_{}^{*2}$ \\
$\text{I}_{5,0}(4^{{+}2}8_{7}^{{-}1})$ & $192_{}^{*2}$
&$\text{II}_{5,0}(4_{4}^{{-}2}8_{1}^{{+}1})$ & $192_{}^{*2}$
&$\text{II}_{5,0}(4_{4}^{{-}2}8_{1}^{{-}1})$ & $64_{}^{*2}$ \\
$\text{I}_{5,0}(4^{{-}2}8_{7}^{{-}1})$ & $192_{}^{*2}$
&$\text{II}_{5,0}(2_{6}^{{+}2}32_{7}^{{+}1})$ & $24_{}^{*2}$
&$\text{II}_{5,0}(2_{1}^{{+}1}8^{{-}2})$ & $144_{}^{*2}$ \\
$\text{II}_{5,0}(2_{7}^{{-}1}8_{6}^{{+}2})$ & $48_{}^{*2}$
&$\text{II}_{5,0}(2_{2}^{{+}2}16_{3}^{{-}1})$ & $96_{}^{*2}$
&$\text{II}_{5,0}(2_{6}^{{+}2}16_{7}^{{+}1})$ & $96_{}^{*2}$ \\
$\text{II}_{5,0}(2_{0}^{{-}2}16_{5}^{{-}1})$ & $48_{}^{*2}$
&$\text{I}_{5,0}(2_{0}^{{-}2}16_{7}^{{+}1})$ & $32_{}^{*2}$
&$\text{II}_{5,0}(2^{{+}2}16_{5}^{{-}1})$ & $192_{}^{*2}$ \\
$\text{I}_{5,0}(8^{{+}2})$ & $48_{}^{*2}$ &$\text{I}_{5,0}(8^{{-}2})$ &
$144_{}^{*2}$ &$\text{I}_{5,0}(2^{{-}2}16_{7}^{{+}1})$ & $96_{}^{*2}$ \\
$\text{I}_{5,0}(2^{{-}2}16_{3}^{{-}1})$ & $96_{}^{*2}$
&$\text{I}_{5,0}(2_{3}^{{+}3}4_{1}^{{+}1})$ & $192_{}^{*1}$
&$\text{I}_{5,0}(2^{{+}2}8_{1}^{{-}1})$ & $192_{}^{*2}$ \\
$\text{I}_{5,0}(2^{{-}2}8_{7}^{{+}1})$ & $384_{}^{*2}$
&$\text{I}_{5,0}(2^{{-}2}8_{7}^{{-}1})$ & $384_{}^{*2}$
&$\text{I}_{5,0}(2_{2}^{{+}2}8_{7}^{{-}1})$ & $64_{}^{*2}$ \\
$\text{I}_{5,0}(2_{1}^{{+}1}4^{{-}2})$ & $192_{}^{*2}$
&$\text{II}_{5,0}(2^{{-}2}8_{1}^{{-}1})$ & $768_{}^{*2}$
&$\text{II}_{5,0}(2^{{-}2}8_{1}^{{+}1})$ & $2304_{}^{*2}$ \\
$\text{II}_{5,0}(2_{6}^{{+}2}8_{7}^{{+}1})$ & $192_{}^{*2}$
&$\text{I}_{5,0}(2_{2}^{{+}2}4_{1}^{{+}1})$ & $128_{}^{*2}$
&$\text{II}_{5,0}(2_{2}^{{+}2}4_{3}^{{-}1})$ & $384_{}^{*2}$ \\
$\text{I}_{5,0}(4_{2}^{{-}2})$ & $128_{}^{*2}$ &$\text{I}_{5,0}(4^{{-}2})$ &
$384_{}^{*2}$ &$\text{I}_{5,0}(4_{4}^{{-}2})$ & $192_{}^{*2}$ \\
$\text{I}_{5,0}(4_{2}^{{+}2})$ & $384_{}^{*2}$ &$\text{I}_{5,0}(4^{{+}2})$ &
$384_{}^{*2}$ &$\text{II}_{5,0}(16_{5}^{{-}1})$ & $240_{}^{*2}$ \\
$\text{I}_{5,0}(16_{7}^{{+}1})$ & $48_{}^{*2}$
&$\text{I}_{5,0}(2^{{+}2}4_{3}^{{-}1})$ & $768_{}^{*2}$
&$\text{II}_{5,0}(2^{{-}2}4_{1}^{{+}1})$ & $2304_{}^{*2}$ \\
$\text{I}_{5,0}(3^{{+}1})$ & $768_{1,0}^{*2}$ &$\text{I}_{5,0}(3^{{-}1})$ &
$576_{1,0}^{*2}$ &$\text{I}_{5,0}(5^{{-}1})$ & $192_{1,0}^{*2}$ \\
$\text{II}_{5,0}(8_{7}^{{+}1}{\times} 3^{{+}1}9^{{+}1})$ & $16_{}^{*4}$
&$\text{II}_{5,0}(2_{7}^{{+}1}{\times} 3^{{+}1})$ & $1440_{1,1}^{*4}$
&$\text{I}_{5,0}(2_{1}^{{+}1}{\times} 3^{{+}1})$ & $192_{1,0}^{*4}$ \\
$\text{I}_{5,0}(7^{{-}1})$ & $96_{1,0}^{*2}$ &$\text{I}_{5,0}(3^{{-}2})$ &
$192_{}^{*2}$ &$\text{II}_{5,0}(2_{1}^{{+}1}{\times} 5^{{+}1})$ &
$480_{1,1}^{*4}$ \\
$\text{I}_{5,0}(2^{{+}2}{\times} 3^{{-}1})$ & $192_{1,0}^{*4}$
&$\text{II}_{5,0}(4_{7}^{{+}1}{\times} 3^{{+}1})$ & $768_{0,1}^{*4}$
&$\text{I}_{5,0}(4_{7}^{{+}1}{\times} 3^{{-}1})$ & $64_{}^{*4}$ \\
$\text{II}_{5,0}(4_{3}^{{-}1}{\times} 3^{{-}1})$ & $576_{0,1}^{*4}$
&$\text{II}_{5,0}(2_{7}^{{-}1}{\times} 7^{{-}1})$ & $240_{1,1}^{*4}$
&$\text{II}_{5,0}(2_{1}^{{+}1}{\times} 3^{{+}2})$ & $576_{1,1}^{*4}$ \\
$\text{II}_{5,0}(4_{5}^{{-}1}{\times} 5^{{-}1})$ & $192_{0,1}^{*4}$
&$\text{II}_{5,0}(2_{7}^{{-}1}{\times} 11^{{+}1})$ & $96_{1,1}^{*4}$
&$\text{II}_{5,0}(8_{7}^{{-}1}{\times} 3^{{+}1})$ & $144_{}^{*4}$ \\
$\text{II}_{5,0}(8_{7}^{{+}1}{\times} 3^{{+}1})$ & $240_{}^{*4}$
&$\text{I}_{5,0}(3^{{+}1}9^{{-}1})$ & $64_{}^{*2}$
&$\text{II}_{5,0}(4_{7}^{{+}1}{\times} 7^{{-}1})$ & $96_{0,1}^{*4}$ \\
$\text{I}_{5,0}(3^{{-}3}9^{{+}1})$ & $192_{}^{*2}$
&$\text{II}_{5,0}(4_{5}^{{-}1}{\times} 3^{{-}2})$ & $192_{}^{*4}$
&$\text{II}_{5,0}(8_{1}^{{-}1}{\times} 5^{{+}1})$ & $48_{}^{*4}$ \\
$\text{II}_{5,0}(8_{7}^{{+}1}{\times} 3^{{-}3}9^{{+}1})$ & $48_{}^{*2}$
&$\text{II}_{5,0}(16_{7}^{{+}1}{\times} 3^{{+}1})$ & $48_{}^{*4}$
&$\text{II}_{5,0}(2^{{-}2}4_{7}^{{+}1}{\times} 3^{{-}1})$ & $192_{}^{*4}$ \\
$\text{I}_{5,0}(2^{{+}2}4_{7}^{{+}1}{\times} 3^{{-}1})$ & $96_{}^{*4}$
&$\text{I}_{5,0}(2^{{+}2}4_{1}^{{+}1}{\times} 3^{{-}1})$ & $192_{}^{*4}$
&$\text{II}_{5,0}(2_{7}^{{+}1}{\times} 3^{{+}1}9^{{+}1})$ & $96_{}^{*4}$ \\
$\text{II}_{5,0}(2_{7}^{{+}1}{\times} 3^{{+}1}9^{{-}1})$ & $96_{}^{*4}$
&$\text{II}_{5,0}(8_{7}^{{+}1}{\times} 7^{{-}1})$ & $24_{}^{*4}$
&$\text{II}_{5,0}(2_{7}^{{-}1}{\times} 3^{{-}3}9^{{+}1})$ & $288_{}^{*2}$ \\
$\text{II}_{5,0}(2_{7}^{{-}1}{\times} 3^{{-}3}9^{{-}1})$ & $288_{}^{*2}$
&$\text{II}_{5,0}(2_{7}^{{-}1}{\times} 3^{{+}1}9^{{-}2})$ & $32_{}^{*4}$
&$\text{II}_{5,0}(8_{1}^{{-}1}{\times} 3^{{+}2})$ & $96_{}^{*4}$ \\
$\text{II}_{5,0}(2_{7}^{{-}1}4^{{+}2}{\times} 3^{{+}1})$ & $72_{}^{*2}$
&$\text{II}_{5,0}(2_{0}^{{-}2}8_{7}^{{+}1}{\times} 3^{{+}1})$ & $24_{}^{*4}$
&$\text{II}_{5,0}(2^{{+}2}8_{7}^{{+}1}{\times} 3^{{+}1})$ & $192_{}^{*4}$ \\
$\text{II}_{5,0}(2^{{+}2}8_{7}^{{-}1}{\times} 3^{{+}1})$ & $64_{}^{*4}$
&$\text{II}_{5,0}(4_{7}^{{+}1}{\times} 3^{{+}1}9^{{-}1})$ & $64_{}^{*4}$
&$\text{II}_{5,0}(4_{7}^{{+}1}{\times} 3^{{-}3}9^{{-}1})$ & $192_{}^{*4}$ \\
$\text{II}_{5,0}(4^{{-}2}8_{7}^{{+}1}{\times} 3^{{+}1})$ & $24_{}^{*4}$
&$\text{II}_{5,0}(4^{{-}2}8_{7}^{{-}1}{\times} 3^{{+}1})$ & $72_{}^{*4}$
&$\text{II}_{5,0}(2_{1}^{{-}1}{\times} 3^{{-}3}27^{{+}1})$ & $32_{}^{*4}$ \\
$\text{II}_{5,0}(2_{7}^{{+}1}{\times} 3^{{+}1}{\times} 5^{{-}1})$ &
$96_{1,1}^{*8}$ \\

\end{resulttable}

\begin{resulttable}{4}{481}
$\text{I}_{4,0}$ & $384_{1,0}^{*1}$ &$\text{I}_{4,0}(2_{1}^{{+}1})$ &
$96_{1,0}^{*2}$ &$\text{I}_{4,0}(4_{1}^{{+}1}8_{1}^{{+}1})$ & $32_{}^{*2}$ \\
$\text{I}_{4,0}(4_{1}^{{+}1}8_{7}^{{-}1})$ & $16_{}^{*2}$
&$\text{II}_{4,0}(4_{5}^{{-}1}8_{7}^{{-}1})$ & $32_{}^{*2}$
&$\text{I}_{4,0}(4_{1}^{{+}1}8_{1}^{{-}1})$ & $32_{}^{*2}$ \\
$\text{II}_{4,0}(4_{3}^{{-}1}8_{1}^{{+}1})$ & $96_{}^{*2}$
&$\text{I}_{4,0}(32_{7}^{{+}1})$ & $12_{}^{*2}$
&$\text{I}_{4,0}(2^{{+}2}8_{1}^{{-}1})$ & $96_{}^{*2}$ \\
$\text{I}_{4,0}(2^{{-}2}8_{7}^{{-}1})$ & $96_{}^{*2}$
&$\text{II}_{4,0}(2_{1}^{{+}1}16_{3}^{{-}1})$ & $24_{}^{*2}$
&$\text{II}_{4,0}(2_{7}^{{-}1}16_{5}^{{-}1})$ & $24_{}^{*2}$ \\
$\text{I}_{4,0}(2_{2}^{{+}2}8_{1}^{{+}1})$ & $32_{}^{*2}$
&$\text{I}_{4,0}(2_{2}^{{+}2}8_{7}^{{-}1})$ & $32_{}^{*2}$
&$\text{I}_{4,0}(2_{7}^{{-}1}16_{7}^{{+}1})$ & $16_{}^{*2}$ \\
$\text{I}_{4,0}(2_{1}^{{+}1}16_{5}^{{-}1})$ & $16_{}^{*2}$
&$\text{II}_{4,0}(2^{{-}2})$ & $1152_{1,1}^{*1}$ &$\text{I}_{4,0}(2_{2}^{{+}2})$
& $64_{1,0}^{*1}$ \\
$\text{I}_{4,0}(4_{3}^{{-}1})$ & $96_{}^{*2}$ &$\text{I}_{4,0}(4_{1}^{{+}1})$ &
$96_{}^{*2}$ &$\text{II}_{4,0}(8^{{-}2})$ & $72_{}^{*1}$ \\
$\text{I}_{4,0}(4_{3}^{{-}1}16_{3}^{{-}1})$ & $8_{}^{*2}$
&$\text{II}_{4,0}(4_{1}^{{+}1}16_{3}^{{-}1})$ & $24_{}^{*2}$
&$\text{I}_{4,0}(8_{6}^{{+}2})$ & $32_{}^{*1}$ \\
$\text{I}_{4,0}(8_{6}^{{-}2})$ & $32_{}^{*2}$
&$\text{I}_{4,0}(2_{0}^{{-}2}16_{7}^{{+}1})$ & $16_{}^{*2}$
&$\text{I}_{4,0}(2_{0}^{{-}2}16_{3}^{{-}1})$ & $16_{}^{*2}$ \\
$\text{I}_{4,0}(8_{2}^{{-}2}64_{3}^{{-}1})$ & $8_{}^{*2}$
&$\text{I}_{4,0}(8_{6}^{{+}2}64_{7}^{{+}1})$ & $8_{}^{*1}$
&$\text{I}_{4,0}(8_{7}^{{-}1})$ & $32_{}^{*2}$ \\
$\text{I}_{4,0}(8_{1}^{{+}1})$ & $96_{}^{*2}$ &$\text{I}_{4,0}(8_{1}^{{-}1})$ &
$32_{}^{*2}$ &$\text{I}_{4,0}(8_{7}^{{+}1})$ & $96_{}^{*2}$ \\
$\text{I}_{4,0}(2_{1}^{{+}1}4_{1}^{{+}1})$ & $32_{}^{*2}$
&$\text{II}_{4,0}(2_{1}^{{+}1}4_{3}^{{-}1})$ & $96_{0,1}^{*2}$
&$\text{I}_{4,0}(4_{2}^{{-}2}64_{7}^{{+}1})$ & $4_{}^{*2}$ \\
$\text{I}_{4,0}(4^{{-}2}64_{7}^{{+}1})$ & $12_{}^{*2}$
&$\text{I}_{4,0}(4_{7}^{{+}1}32_{7}^{{+}1})$ & $8_{}^{*2}$
&$\text{II}_{4,0}(4_{5}^{{-}1}32_{7}^{{+}1})$ & $24_{}^{*2}$ \\
$\text{II}_{4,0}(4_{3}^{{-}1}32_{1}^{{-}1})$ & $8_{}^{*2}$
&$\text{I}_{4,0}(4_{1}^{{+}1}32_{1}^{{-}1})$ & $8_{}^{*2}$
&$\text{II}_{4,0}(8_{1}^{{-}1}16_{3}^{{-}1})$ & $8_{}^{*2}$ \\
$\text{II}_{4,0}(8_{1}^{{+}1}16_{3}^{{-}1})$ & $24_{}^{*2}$
&$\text{I}_{4,0}(2^{{-}2}32_{1}^{{-}1})$ & $12_{}^{*2}$
&$\text{I}_{4,0}(8_{7}^{{+}1}16_{7}^{{+}1})$ & $16_{}^{*2}$ \\
$\text{I}_{4,0}(8_{1}^{{+}1}16_{5}^{{-}1})$ & $16_{}^{*2}$
&$\text{I}_{4,0}(2_{0}^{{-}2}32_{7}^{{+}1})$ & $8_{}^{*2}$
&$\text{I}_{4,0}(8_{6}^{{+}2}128_{7}^{{+}1})$ & $4_{}^{*2}$ \\
$\text{I}_{4,0}(4_{2}^{{+}2})$ & $64_{}^{*1}$ &$\text{I}_{4,0}(4_{2}^{{-}2})$ &
$64_{}^{*1}$ &$\text{I}_{4,0}(4^{{-}2})$ & $96_{}^{*2}$ \\
$\text{I}_{4,0}(16_{7}^{{+}1})$ & $24_{}^{*2}$ &$\text{I}_{4,0}(16_{3}^{{-}1})$
& $24_{}^{*2}$ &$\text{I}_{4,0}(2_{2}^{{+}2}4_{1}^{{+}1})$ & $32_{}^{*1}$ \\
$\text{I}_{4,0}(2_{1}^{{+}1}8_{7}^{{-}1})$ & $16_{}^{*2}$
&$\text{I}_{4,0}(4_{6}^{{+}2}32_{7}^{{+}1})$ & $16_{}^{*2}$
&$\text{I}_{4,0}(4^{{-}2}32_{7}^{{+}1})$ & $48_{}^{*2}$ \\
$\text{I}_{4,0}(16_{5}^{{-}1}32_{1}^{{-}1})$ & $4_{}^{*2}$
&$\text{I}_{4,0}(4_{4}^{{-}2}32_{1}^{{-}1})$ & $8_{}^{*2}$
&$\text{I}_{4,0}(4^{{+}2}32_{1}^{{-}1})$ & $16_{}^{*2}$ \\
$\text{I}_{4,0}(8_{7}^{{+}1}64_{7}^{{+}1})$ & $4_{}^{*2}$
&$\text{I}_{4,0}(4_{2}^{{+}2}32_{7}^{{-}1})$ & $16_{}^{*2}$
&$\text{I}_{4,0}(4^{{+}2}32_{7}^{{-}1})$ & $16_{}^{*2}$ \\
$\text{I}_{4,0}(4^{{-}2}16_{5}^{{-}1})$ & $96_{}^{*2}$
&$\text{I}_{4,0}(4^{{-}2}16_{1}^{{+}1})$ & $96_{}^{*2}$
&$\text{I}_{4,0}(16_{2}^{{-}2})$ & $8_{}^{*1}$ \\
$\text{I}_{4,0}(16_{6}^{{+}2})$ & $8_{}^{*1}$
&$\text{I}_{4,0}(4_{6}^{{+}2}16_{7}^{{+}1})$ & $16_{}^{*1}$
&$\text{I}_{4,0}(3^{{-}1})$ & $96_{1,0}^{*2}$ \\
$\text{I}_{4,0}(3^{{+}1})$ & $96_{1,0}^{*2}$
&$\text{II}_{4,0}(8_{1}^{{-}1}16_{5}^{{-}1}{\times} 3^{{+}1})$ & $8_{}^{*4}$
&$\text{I}_{4,0}(2^{{-}2}32_{7}^{{+}1}{\times} 3^{{-}1})$ & $4_{}^{*4}$ \\
$\text{II}_{4,0}(4_{5}^{{-}1}32_{1}^{{-}1}{\times} 3^{{+}1})$ & $8_{}^{*4}$
&$\text{II}_{4,0}(5^{{+}1})$ & $240_{1,1}^{*2}$ &$\text{I}_{4,0}(5^{{+}1})$ &
$96_{1,0}^{*2}$ \\
$\text{I}_{4,0}(5^{{-}1})$ & $32_{1,0}^{*2}$
&$\text{I}_{4,0}(2_{1}^{{+}1}{\times} 3^{{+}1})$ & $32_{1,0}^{*4}$
&$\text{I}_{4,0}(4^{{-}2}16_{5}^{{-}1}{\times} 3^{{-}1})$ & $48_{}^{*2}$ \\
$\text{I}_{4,0}(4^{{-}2}16_{1}^{{+}1}{\times} 3^{{-}1})$ & $48_{}^{*2}$
&$\text{I}_{4,0}(4^{{+}2}16_{3}^{{-}1}{\times} 3^{{-}1})$ & $16_{}^{*4}$
&$\text{I}_{4,0}(4_{4}^{{-}2}16_{3}^{{-}1}{\times} 3^{{-}1})$ & $4_{}^{*2}$ \\
$\text{I}_{4,0}(7^{{-}1})$ & $24_{1,0}^{*2}$ &$\text{II}_{4,0}(3^{{+}2})$ &
$288_{1,1}^{*1}$ &$\text{I}_{4,0}(3^{{+}2})$ & $64_{1,0}^{*1}$ \\
$\text{I}_{4,0}(3^{{-}2})$ & $48_{}^{*1}$ &$\text{I}_{4,0}(9^{{-}1})$ &
$32_{}^{*2}$ &$\text{I}_{4,0}(2_{0}^{{-}2}{\times} 3^{{-}1})$ & $32_{2,0}^{*4}$
\\
$\text{II}_{4,0}(2_{6}^{{-}2}{\times} 3^{{+}1})$ & $96_{2,1}^{*4}$
&$\text{II}_{4,0}(2_{2}^{{+}2}{\times} 3^{{-}1})$ & $96_{2,1}^{*4}$
&$\text{I}_{4,0}(4_{7}^{{+}1}{\times} 3^{{-}1})$ & $16_{}^{*4}$ \\
$\text{I}_{4,0}(4_{1}^{{+}1}{\times} 3^{{-}1})$ & $48_{}^{*4}$
&$\text{I}_{4,0}(4^{{-}2}32_{7}^{{+}1}{\times} 3^{{+}1})$ & $16_{}^{*4}$
&$\text{I}_{4,0}(4^{{-}2}32_{1}^{{-}1}{\times} 3^{{+}1})$ & $16_{}^{*4}$ \\
$\text{II}_{4,0}(13^{{+}1})$ & $48_{1,1}^{*2}$
&$\text{I}_{4,0}(2^{{-}2}4_{3}^{{-}1}{\times} 3^{{+}2})$ & $48_{}^{*2}$
&$\text{II}_{4,0}(4^{{+}2}{\times} 3^{{+}2})$ & $16_{}^{*1}$ \\
$\text{II}_{4,0}(4^{{-}2}{\times} 3^{{+}2})$ & $144_{}^{*1}$
&$\text{II}_{4,0}(17^{{-}1})$ & $24_{1,1}^{*2}$
&$\text{II}_{4,0}(2^{{-}2}{\times} 5^{{-}1})$ & $96_{2,1}^{*4}$ \\
$\text{I}_{4,0}(2^{{+}2}{\times} 5^{{-}1})$ & $24_{2,0}^{*4}$
&$\text{II}_{4,0}(4^{{-}2}{\times} 7^{{+}2})$ & $16_{}^{*1}$
&$\text{I}_{4,0}(8_{7}^{{+}1}{\times} 3^{{+}1})$ & $32_{}^{*4}$ \\
$\text{I}_{4,0}(8_{1}^{{-}1}{\times} 3^{{+}1})$ & $32_{}^{*4}$
&$\text{II}_{4,0}(2_{1}^{{+}1}4_{5}^{{-}1}{\times} 3^{{+}1})$ & $32_{0,1}^{*4}$
&$\text{II}_{4,0}(5^{{-}2})$ & $72_{1,1}^{*1}$ \\
$\text{I}_{4,0}(5^{{-}2})$ & $16_{1,0}^{*1}$
&$\text{I}_{4,0}(2^{{+}2}4_{3}^{{-}1}{\times} 3^{{+}2}9^{{-}1})$ & $24_{}^{*2}$
&$\text{II}_{4,0}(4^{{+}2}{\times} 3^{{+}2}9^{{+}1})$ & $8_{}^{*1}$ \\
$\text{II}_{4,0}(4^{{-}2}{\times} 3^{{+}2}9^{{+}1})$ & $72_{}^{*1}$
&$\text{II}_{4,0}(4^{{-}2}{\times} 3^{{+}1}27^{{+}1})$ & $8_{}^{*2}$
&$\text{I}_{4,0}(3^{{+}1}9^{{-}1})$ & $16_{}^{*2}$ \\
$\text{I}_{4,0}(3^{{+}1}9^{{+}1})$ & $32_{}^{*2}$
&$\text{I}_{4,0}(2^{{+}2}{\times} 7^{{-}1})$ & $24_{2,1}^{*4}$
&$\text{I}_{4,0}(2_{0}^{{-}2}{\times} 7^{{-}1})$ & $8_{2,0}^{*2}$ \\
$\text{II}_{4,0}(2^{{+}2}{\times} 3^{{+}2})$ & $64_{1,0}^{*1}$
&$\text{II}_{4,0}(2^{{-}2}{\times} 3^{{-}2})$ & $144_{}^{*1}$
&$\text{II}_{4,0}(2_{0}^{{-}2}{\times} 3^{{+}2})$ & $48_{2,0}^{*2}$ \\
$\text{II}_{4,0}(2^{{-}2}{\times} 9^{{-}1})$ & $96_{}^{*4}$
&$\text{II}_{4,0}(13^{{-}2})$ & $8_{1,1}^{*1}$
&$\text{I}_{4,0}(2_{0}^{{-}2}4_{1}^{{+}1}{\times} 3^{{-}1})$ & $16_{}^{*2}$ \\
$\text{I}_{4,0}(4_{2}^{{-}2}{\times} 3^{{-}1})$ & $16_{}^{*4}$
&$\text{II}_{4,0}(4_{2}^{{-}2}{\times} 3^{{-}1})$ & $32_{}^{*4}$
&$\text{II}_{4,0}(4_{2}^{{+}2}{\times} 3^{{-}1})$ & $96_{}^{*4}$ \\
$\text{I}_{4,0}(4^{{+}2}{\times} 3^{{+}1})$ & $32_{}^{*4}$
&$\text{II}_{4,0}(4_{6}^{{+}2}{\times} 3^{{+}1})$ & $96_{}^{*4}$
&$\text{I}_{4,0}(2^{{-}2}4_{1}^{{+}1}{\times} 3^{{+}1})$ & $48_{}^{*2}$ \\
$\text{I}_{4,0}(2^{{+}2}4_{7}^{{+}1}{\times} 3^{{-}1})$ & $48_{}^{*2}$
&$\text{II}_{4,0}(2_{7}^{{-}1}8_{7}^{{-}1}{\times} 3^{{+}1})$ & $12_{}^{*4}$
&$\text{II}_{4,0}(7^{{+}2})$ & $32_{1,1}^{*1}$ \\
$\text{I}_{4,0}(4^{{-}2}{\times} 3^{{+}1}9^{{+}1})$ & $32_{}^{*4}$
&$\text{II}_{4,0}(4_{6}^{{+}2}{\times} 3^{{+}1}9^{{-}1})$ & $16_{}^{*4}$
&$\text{I}_{4,0}(2^{{+}2}4_{3}^{{-}1}{\times} 3^{{-}1}9^{{-}1})$ & $16_{}^{*2}$
\\
$\text{II}_{4,0}(2_{7}^{{+}1}8_{7}^{{+}1}{\times} 3^{{+}1}9^{{+}1})$ &
$4_{}^{*4}$ &$\text{I}_{4,0}(2^{{+}2}4_{7}^{{+}1}{\times} 3^{{-}1}9^{{+}1})$ &
$8_{}^{*2}$ &$\text{II}_{4,0}(8_{6}^{{+}2}{\times} 3^{{+}1})$ & $24_{}^{*4}$ \\
$\text{II}_{4,0}(8_{6}^{{-}2}{\times} 3^{{+}1})$ & $8_{}^{*4}$
&$\text{I}_{4,0}(2^{{+}2}16_{3}^{{-}1}{\times} 3^{{-}1})$ & $12_{}^{*4}$
&$\text{I}_{4,0}(2^{{+}2}16_{7}^{{+}1}{\times} 3^{{-}1})$ & $12_{}^{*4}$ \\
$\text{II}_{4,0}(2^{{+}2}{\times} 3^{{+}2}9^{{-}1})$ & $32_{}^{*2}$
&$\text{II}_{4,0}(2^{{-}2}{\times} 9^{{-}2})$ & $16_{}^{*1}$
&$\text{II}_{4,0}(2^{{-}2}{\times} 9^{{+}2})$ & $64_{}^{*1}$ \\
$\text{II}_{4,0}(2^{{-}2}{\times} 3^{{-}2}9^{{-}1})$ & $144_{}^{*2}$
&$\text{II}_{4,0}(2_{0}^{{+}2}{\times} 3^{{+}2}9^{{+}1})$ & $24_{}^{*2}$
&$\text{I}_{4,0}(8^{{-}2}{\times} 3^{{-}1}9^{{-}1})$ & $8_{}^{*4}$ \\
$\text{I}_{4,0}(2^{{+}2}16_{7}^{{+}1}{\times} 3^{{-}1}9^{{-}1})$ & $4_{}^{*4}$
&$\text{I}_{4,0}(2^{{+}2}16_{3}^{{-}1}{\times} 3^{{-}1}9^{{-}1})$ & $4_{}^{*4}$
&$\text{II}_{4,0}(4^{{-}2}{\times} 5^{{+}1})$ & $24_{}^{*4}$ \\
$\text{I}_{4,0}(2^{{+}2}4_{1}^{{+}1}{\times} 5^{{-}1})$ & $24_{}^{*4}$
&$\text{II}_{4,0}(3^{{+}2}9^{{-}1})$ & $144_{}^{*1}$
&$\text{II}_{4,0}(3^{{+}2}9^{{+}1})$ & $144_{}^{*1}$ \\
$\text{II}_{4,0}(3^{{+}1}27^{{+}1})$ & $16_{}^{*2}$
&$\text{I}_{4,0}(3^{{+}2}9^{{-}1})$ & $32_{}^{*1}$
&$\text{I}_{4,0}(3^{{+}2}9^{{+}1})$ & $32_{}^{*1}$ \\
$\text{I}_{4,0}(3^{{-}2}9^{{+}1})$ & $48_{}^{*2}$
&$\text{II}_{4,0}(2^{{-}2}{\times} 5^{{+}2}25^{{+}1})$ & $32_{}^{*2}$
&$\text{II}_{4,0}(3^{{+}1}9^{{-}1}27^{{+}1})$ & $24_{}^{*2}$ \\
$\text{II}_{4,0}(3^{{+}1}9^{{+}1}27^{{+}1})$ & $24_{}^{*1}$
&$\text{I}_{4,0}(3^{{+}1}9^{{+}1}27^{{-}1})$ & $16_{}^{*1}$
&$\text{II}_{4,0}(4_{7}^{{+}1}8_{7}^{{+}1}{\times} 3^{{+}1})$ & $32_{}^{*4}$ \\
$\text{I}_{4,0}(2^{{-}2}8_{7}^{{+}1}{\times} 3^{{-}1})$ & $32_{}^{*4}$
&$\text{I}_{4,0}(2^{{+}2}8_{1}^{{-}1}{\times} 3^{{-}1})$ & $32_{}^{*4}$
&$\text{II}_{4,0}(2_{1}^{{+}1}16_{5}^{{-}1}{\times} 3^{{+}1})$ & $8_{}^{*4}$ \\
$\text{II}_{4,0}(2_{7}^{{-}1}16_{7}^{{+}1}{\times} 3^{{+}1})$ & $8_{}^{*4}$
&$\text{II}_{4,0}(2^{{+}2}{\times} 5^{{-}2})$ & $16_{1,0}^{*1}$
&$\text{II}_{4,0}(2^{{-}2}{\times} 5^{{+}2})$ & $64_{}^{*1}$ \\
$\text{II}_{4,0}(2^{{-}2}{\times} 11^{{-}2})$ & $16_{}^{*1}$
&$\text{I}_{4,0}(2^{{-}2}4_{5}^{{-}1}{\times} 3^{{-}1}9^{{-}1}27^{{-}1})$ &
$4_{}^{*2}$ &$\text{II}_{4,0}(4^{{-}2}{\times} 3^{{+}1}9^{{+}1}27^{{+}1})$ &
$12_{}^{*1}$ \\
$\text{I}_{4,0}(2^{{+}2}{\times} 3^{{-}1}9^{{+}1})$ & $16_{}^{*4}$
&$\text{I}_{4,0}(2^{{+}2}{\times} 3^{{-}1}9^{{-}1})$ & $32_{}^{*4}$
&$\text{I}_{4,0}(2^{{-}2}4_{3}^{{-}1}{\times} 7^{{-}1})$ & $12_{}^{*2}$ \\
$\text{II}_{4,0}(4_{6}^{{+}2}{\times} 7^{{-}1})$ & $8_{}^{*4}$
&$\text{I}_{4,0}(4^{{-}2}{\times} 7^{{-}1})$ & $24_{}^{*4}$
&$\text{II}_{4,0}(7^{{+}2}49^{{-}1})$ & $8_{}^{*1}$ \\
$\text{II}_{4,0}(5^{{-}2}25^{{+}1})$ & $24_{}^{*2}$
&$\text{II}_{4,0}(2^{{-}2}{\times} 3^{{-}1}9^{{-}1}27^{{+}1})$ & $48_{}^{*2}$
&$\text{II}_{4,0}(2_{0}^{{+}2}{\times} 3^{{+}1}9^{{+}1}27^{{+}1})$ & $4_{}^{*2}$
\\
$\text{II}_{4,0}(5^{{+}1}25^{{+}1})$ & $16_{}^{*2}$
&$\text{II}_{4,0}(5^{{+}1}25^{{-}1})$ & $24_{}^{*2}$
&$\text{II}_{4,0}(3^{{+}1}{\times} 23^{{-}1})$ & $8_{1,1}^{*4}$ \\
$\text{II}_{4,0}(3^{{-}2}9^{{+}1}{\times} 5^{{+}1})$ & $24_{}^{*2}$
&$\text{II}_{4,0}(3^{{+}2}9^{{-}1}{\times} 5^{{-}1})$ & $48_{}^{*4}$
&$\text{I}_{4,0}(3^{{+}1}{\times} 5^{{-}1})$ & $16_{1,0}^{*4}$ \\
$\text{II}_{4,0}(3^{{-}1}{\times} 7^{{+}1})$ & $48_{1,1}^{*4}$
&$\text{II}_{4,0}(3^{{+}1}{\times} 7^{{-}1})$ & $48_{1,1}^{*4}$
&$\text{II}_{4,0}(3^{{+}1}9^{{+}1}27^{{+}1}{\times} 5^{{-}1})$ & $8_{}^{*4}$ \\
$\text{II}_{4,0}(3^{{+}1}{\times} 11^{{+}1})$ & $16_{1,1}^{*4}$
&$\text{I}_{4,0}(2^{{+}2}4_{7}^{{+}1}{\times} 3^{{-}1}{\times} 5^{{+}1})$ &
$8_{}^{*4}$ &$\text{II}_{4,0}(4_{6}^{{-}2}{\times} 3^{{+}1}{\times} 5^{{-}1})$ &
$16_{}^{*8}$ \\
$\text{II}_{4,0}(3^{{+}2}{\times} 5^{{-}1})$ & $48_{1,1}^{*4}$
&$\text{II}_{4,0}(9^{{+}1}{\times} 5^{{+}1})$ & $16_{}^{*4}$
&$\text{II}_{4,0}(3^{{+}1}9^{{+}1}{\times} 7^{{-}1})$ & $16_{}^{*4}$ \\
$\text{II}_{4,0}(3^{{+}1}9^{{-}1}{\times} 7^{{-}1})$ & $8_{}^{*4}$
&$\text{II}_{4,0}(4^{{-}2}{\times} 3^{{+}1}{\times} 11^{{+}1})$ & $8_{}^{*4}$
&$\text{I}_{4,0}(2^{{+}2}{\times} 3^{{-}1}{\times} 5^{{+}1})$ & $16_{2,1}^{*8}$
\\

\end{resulttable}

\vspace{1cm}

\normalsize 

%
%
%
%
%
%
%
%
%


\newpage

\bibliographystyle{amsplain}
\bibliography{literatur/literatur}

\providecommand{\bysame}{\leavevmode\hbox to3em{\hrulefill}\thinspace}
\providecommand{\MR}{\relax\ifhmode\unskip\space\fi MR }
\providecommand{\MRhref}[2]{%
  \href{http://www.ams.org/mathscinet-getitem?mr=#1}{#2}
}
\providecommand{\href}[2]{#2}
\begin{thebibliography}{10}

\bibitem{magma}
W.~Bosma, J.~Cannon, and C.~Playoust, \emph{The {M}agma algebra system. {I}.
  {T}he user language}, J. Symbolic Comput. \textbf{24} (1997), no.~3-4,
  235--265, Computational algebra and number theory (London, 1993).

\bibitem{massformula}
J.~H. Conway and N.~Sloane, \emph{Low-dimensional lattices. iv. {T}he mass
  formula}, Proc. R. Soc. London \textbf{419} (1988), 259--286.

\bibitem{SPLAG}
\bysame, \emph{On the classification of integral quadratic forms}, Sphere
  packings, lattices, and groups, Springer, Berlin, 3rd ed., 1999,
  pp.~352--384.

\bibitem{hanke}
J.~Hanke, \emph{Enumerating maximal definite quadratic forms of bounded class
  number over {Z} in n >= 3 variables}, preprint,
  \url{http://arxiv.org/abs/1110.1876v1}, 2011.

\bibitem{jagy}
W.~C. Jagy, I.~Kaplansky, and A.~Schiemann, \emph{There are 913 regular ternary
  forms}, Mathematika \textbf{44} (1997), no.~02, 332--341.

\bibitem{latdb}
G.~Nebe and N.~Sloane, \emph{Catalogue of lattices},
  \url{http://www.math.rwth-aachen.de/~Gabriele.Nebe/LATTICES}.

\bibitem{OMeara}
O.~Timothy O'Meara, \emph{Introduction to {Q}uadratic {F}orms}, Springer, 1973.

\bibitem{voight}
J.~Voight, \emph{Quadratic forms that represent almost the same primes},
  Mathematics of Computation \textbf{76} (2007), 1589--1617.

\bibitem{watsonsingleclass}
G.~L. Watson, \emph{Transformations of a quadratic form which do not increase
  the class-number}, Proc. London Math. Soc. (3) \textbf{12} (1962), 577--587.

\bibitem{watsonclassnumber}
\bysame, \emph{The class-number of a positive quadratic form}, Proc. London
  Math. Soc. (3) \textbf{13} (1963), 549--576.

\bibitem{ternary1}
\bysame, \emph{One-class genera of positive ternary quadratic forms},
  Mathematika \textbf{19} (1972), 96--104.

\bibitem{watson5}
\bysame, \emph{One-class genera of positive quadratic forms in at least five
  variables}, Acta Arith. \textbf{26} (1974/75), no.~3, 309--327.

\bibitem{quaternary}
\bysame, \emph{One-class genera of positive quaternary quadratic forms}, Acta
  Arithmetica \textbf{24} (1975), no.~1, 461--475.

\bibitem{ternary2}
\bysame, \emph{One-class genera of positive ternary quadratic forms. {II}},
  Mathematika \textbf{22} (1975), no.~1, 1--11.

\bibitem{watsontransformations}
\bysame, \emph{Transformations of a quadratic form which do not increase the
  class-number ({I}{I})}, Acta Arithmetica \textbf{27} (1975), 171--189.

\bibitem{watson910}
\bysame, \emph{One-class genera of positive quadratic forms in nine and ten
  variables}, Mathematika \textbf{25} (1978), no.~1, 57--67.

\bibitem{watson8}
\bysame, \emph{One-class genera of positive quadratic forms in eight
  variables}, J. London Math. Soc. (2) \textbf{26} (1982), no.~2, 227--244.

\bibitem{watson7}
\bysame, \emph{One-class genera of positive quadratic forms in seven
  variables}, Proc. London Math. Soc. (3) \textbf{48} (1984), no.~1, 175--192.

\bibitem{watson6}
\bysame, \emph{One-class genera of positive quadratic forms in six variables},
  unfinished draft, 1988.

\end{thebibliography}

\vfill

\end{document}